\newtheorem{theorem}{Theorem}[section]
\newtheorem{lemma}[theorem]{Lemma}
\newtheorem{prop}[theorem]{Proposition}
\newtheorem{cor}[theorem]{Corollary}
\theoremstyle{definition}\newtheorem{defn}{Definition}[section]
\theoremstyle{remark}\newtheorem{example}{Example}
\theoremstyle{remark}\newtheorem{rem}{Remark}
\theoremstyle{remark}
\theoremstyle{remark}
\begin{document}

\newcommand{\ad}{\text{ad}}
\newcommand{\tr}{\text{tr}}
\newcommand{\bDelta}{\trueroots}
\newcommand{\Hom}{\text{Hom}}
\newcommand{\op}{\text{op}}
\newcommand{\id}{\text{id}}
\newcommand{\pr}{\text{pr}}
\newcommand{\simto}{\overset{\sim}{\to}}
\newcommand{\For}{\text{For}}
\newcommand{\Chi}{\mathcal{X}}
\newcommand{\Res}{\text{Res}}
\newcommand{\roots}{\mathbf{Q}}
\newcommand{\trueroots}{\mathbf{\Delta}}
\newcommand{\positiveroots}{\mathbf{\Delta}^+}
\newcommand{\simpleroots}{\mathbf{\Sigma}}
\newcommand{\weights}{\mathbf{P}}
\newcommand{\wt}{\mathsf{wt}} 
\newcommand{\ZZ}{\mathbb{Z}}
\renewcommand*{\O}{\mathcal{O}}
\newcommand{\ch}{\text{ch}} 

\title{Tensor-closed objects in the BGG category of a quantized semisimple Lie algebra}
\author{Zhaoting Wei\footnote{Kent State University at Geauga, 14111 Claridon Troy Road, Burton, OH 44021, USA, \texttt{zwei3@kent.edu}}}

\maketitle
\begin{abstract}
We consider the BGG category $\O$ of a quantized universal enveloping algebra  $U_q(\mathfrak{g})$. We call a module $M\in \O$  tensor-closed if $M\otimes N\in\O$ for any $N\in \O$. In this paper we prove that $M\in \O$ is tensor-closed if and only if $M$ is finite dimensional. The method used in this paper applies to the unquantized case as well.
\end{abstract}

MSC: primary 17B37; secondary 17B10, 16T20

Keywords: BGG category, quantized universal enveloping algebra, tensor product, formal character

\section*{Acknowledgment}
James Humphreys told the author to use formal characters to show certain modules are not in $\O$. Peter McNamara told the author to consider the rational form of formal characters. Victor Ostrik provided the counter-example in Remark \ref{rmk: counter example}. The author wants to thank them all. The author also want to thank Weibo Fu for kindly answering questions related to this paper.

\section{Introduction}
BGG category $\O$ plays a central role in representation theory, see \cite{humphreys2008representations}. For a complex semisimple Lie algebra $\mathfrak{g}$ we can consider its quantized universal enveloping algebra $U_q(\mathfrak{g})$ and the category $\O$ of $U_q(\mathfrak{g})$ as in \cite{andersen2011category} and  \cite{voigt2017complex}.

The large category $U_q(\mathfrak{g})$-Mod has a tensor product but category $\O$ is not closed under the tensor product.  We call a module $M\in \O$  \emph{tensor-closed} if $M\otimes N\in\O$ for any $N\in \O$. It is easy to show that finite dimensional modules are tensor-closed. Actually in \cite{lunts1999localization} used tensor products of finite dimensional  $U_q(\mathfrak{g})$-modules to construct the coordinate ring of the deformed flag variety of $\mathfrak{g}$. Therefore it is interesting to ask whether we can characterize finite dimensional $U_q(\mathfrak{g})$-modules in a categorical way, to which we give an affirmative answer in this paper.

For (unquantized) complex semisimple Lie algebra $\mathfrak{g}$ it is  a folklore  theorem that any tensor-closed module in $\O$ must be finite dimensional, see \cite{humphreys2015tensorclosed} for an outline of the proof.

The main result of this paper is Theorem \ref{thm: tensor-closed is finite dimensional}, which claims that  $M\in \O$ of  $U_q(\mathfrak{g})$ is tensor-closed if and only if $M$ is finite dimensional. This result gives a categorical characterization of finite dimensional modules in category $\O$. The proof is based on the idea in \cite{humphreys2015tensorclosed} together with a careful study of rational expressions of formal characters of modules in $\O$. We can apply the same proof to the unquantized case with little modification.

\section{A review of the BGG category $\O$ of a quantized universal enveloping algebra}
\subsection{A review of quantized universal enveloping algebras}
We follow the notations in \cite{voigt2017complex}. Please also see \cite{andersen2011category} for  references. Let $\mathfrak{g}$ a semisimple Lie algebra over $ \mathbb{C} $ of rank $ N $. 
We fix a Cartan subalgebra $ \mathfrak{h} \subset \mathfrak{g} $. Let $\trueroots$ be the set of roots  and we fix $ \simpleroots = \{\alpha_1, \dots, \alpha_N\}\subset \trueroots $ the set of simple roots. 
We write $ (\;,\;) $ for the bilinear form on $ \mathfrak{h}^* $ obtained by rescaling the Killing form such that the 
shortest root $ \alpha $ of $ \mathfrak{g} $ satisfies $ (\alpha,\alpha) = 2 $. For a root $\beta\in \trueroots$ 
we set $d_\beta=(\beta,\beta)/2$ and let $\beta^{\vee}=\beta/d_{\beta}$ be the corresponding coroot.
In particular let $ 
d_i = (\alpha_i, \alpha_i)/2
$
and hence 
$
\alpha_i^\vee = d_i^{-1} \alpha_i
$
for  $ i = 1, \dots, N $. 

Denote by $ \varpi_1, \dots, \varpi_N $ the fundamental weights of $ \mathfrak{g} $, 
satisfying the relations $ (\varpi_i, \alpha_j^\vee) = \delta_{ij} $. 
We write 
\begin{equation}
\weights = \bigoplus_{j = 1}^N \mathbb{Z} \varpi_j,
 \qquad \roots = \bigoplus_{j = 1}^N \mathbb{Z} \alpha_j,
 \qquad \roots^\vee = \bigoplus_{j = 1}^N \mathbb{Z} \alpha_j^\vee,
\end{equation}
for the weight, root and coroot lattices of $ \mathfrak{g} $, respectively.  It is well-known that $\beta^{\vee}\in \roots^\vee$ for each $\beta\in \trueroots$. 
 
Let  $ \weights^+ $ denote the set of dominant integral weights  and $ \roots^+ $ denote the set of  non-negative integer combinations of the simple roots. Let $\positiveroots= \roots^+ \cap \trueroots$ be the set of positive roots.

As in the standard notation, let   $(a_{ij})_{1\leq i,j\leq N}$ be the Cartan matrix for $ \mathfrak{g} $  and let $W$ be the Weyl group  for $ \mathfrak{g}$. See \cite[Chapter III]{james1978introduction} for details.


\begin{defn} \label{def:uqg}[\cite[Definition 2.13]{voigt2017complex}]
Let   $ q = e^h \in \mathbb{R}^\times $ be an invertible element for $h\in \mathbb{R}^{\times}$. It is clear $q$ is not a root of $1$.
The algebra $ U_q(\mathfrak{g}) $ over $ \mathbb{C} $ has generators $ K_\lambda $ 
for $ \lambda \in \weights $, and $ E_i, F_i $ for $ i = 1, \dots, N $, and the defining relations for $ U_q(\mathfrak{g}) $ are 
\begin{equation}
\begin{split}
K_0 = 1,~K_\lambda K_\mu = K_{\lambda + \mu}, &~K_\lambda E_j K_\lambda^{-1} = q^{(\lambda, \alpha_j)} E_j,~K_\lambda F_j K_\lambda^{-1} = q^{-(\lambda, \alpha_j)} F_j, \\ 
&[E_i, F_j] = \delta_{ij} \frac{K_i - K_i^{-1}}{q_i - q_i^{-1}}, 
\end{split}
\end{equation} 
for all $ \lambda, \mu \in \weights $ and all $ i, j $, together with the quantum Serre relations 
\begin{equation}
\begin{split}
&\sum_{k = 0}^{1 - a_{ij}} (-1)^k \begin{bmatrix} 1 - a_{ij} \\ k \end{bmatrix}_{q_i}
E_i^{1 - a_{ij} - k} E_j E_i^k = 0 \\ 
&\sum_{k = 0}^{1 - a_{ij}} (-1)^k \begin{bmatrix} 1 - a_{ij} \\ k \end{bmatrix}_{q_i}
F_i^{1 - a_{ij} - k} F_j F_i^k = 0. 
\end{split}
\end{equation}
In the above formulas we abbreviate $ K_i = K_{\alpha_i} $ for all simple roots, and we use the notation $ q_i = q^{d_i} $. 
\end{defn}

$ U_q(\mathfrak{g}) $ is a Hopf algebra 
with comultiplication $ \hat{\Delta}: U_q(\mathfrak{g}) \rightarrow U_q(\mathfrak{g}) \otimes U_q(\mathfrak{g}) $ given by 
\begin{equation}
\begin{split}
\hat{\Delta}(K_\lambda) &= K_\lambda \otimes K_\lambda, \\
\hat{\Delta}(E_i) &= E_i \otimes K_i + 1 \otimes E_i \\
\hat{\Delta}(F_i) &= F_i \otimes 1 + K_i^{-1} \otimes F_i,  
\end{split}
\end{equation}
counit $ \hat{\epsilon}: U_q(\mathfrak{g}) \rightarrow \mathbb{C} $ given by $
\hat{\epsilon}(K_\lambda) = 1, \hat{\epsilon}(E_j) = 0,  \hat{\epsilon}(F_j) = 0$, 
and antipode $ \hat{S}: U_q(\mathfrak{g}) \rightarrow U_q(\mathfrak{g}) $ given by $
\hat{S}(K_\lambda) = K_{-\lambda},  \hat{S}(E_j) = -E_j K_j^{-1},  \hat{S}(F_j) = -K_j F_j
$.

Let $ U_q(\mathfrak{n}_+) $ be the subalgebra of $ U_q(\mathfrak{g}) $ generated by the elements 
$ E_1, \dots, E_N $, and $ U_q(\mathfrak{b}_+) $ be the subalgebra of $ U_q(\mathfrak{g}) $ generated by 
$ E_1, \dots, E_N $ and all $ K_\lambda $ for $ \lambda \in \weights $. We  define $U_q(\mathfrak{n}_-)$ and $ U_q(\mathfrak{b}_-) $ in the same way.
Moreover we let $ U_q(\mathfrak{h}) $ be the subalgebra generated by the elements $ K_\lambda $ 
for $ \lambda \in \weights $.  These algebras are  Hopf subalgebras of $ U_q(\mathfrak{g}) $. By \cite[Proposition 2.14]{voigt2017complex}
we know that there is a linear isomorphism 
\begin{equation}
U_q(\mathfrak{n}_-) \otimes U_q(\mathfrak{h}) \otimes U_q(\mathfrak{n}_+) \cong U_q(\mathfrak{g}).
\end{equation}

\subsection{A review of the BGG category $\O$}
Recall  that $ 1 \neq q = e^h $ for an $h\in \mathbb{R}^{\times}$. 
We shall also use the notation $ \hbar = \tfrac{h}{2 \pi} $ hence $q=e^{2\pi\hbar}$. 

As in \cite[Section 2.3.1]{voigt2017complex} we let $\mathfrak{h}^* = \Hom_{\mathbb{C}}(\mathfrak{h},\mathbb{C})$ and $\mathfrak{h}_q^* = \mathfrak{h}^*/i\hbar^{-1}\roots^\vee$ be the parameter space for weights. Here $i=\sqrt{-1}$. It is clear that there is an embedding $\text{Span}_{\mathbb{R}}\trueroots \subset \mathfrak{h}_q^*$. In particular $\roots\subset\weights\subset \mathfrak{h}_q^*$.

One says that a vector $v$ in    a left $ U_q(\mathfrak{g}) $-module is a weight vector of weight $\lambda\in\mathfrak{h}_q^*$ if it is a common eigenvector for the action of $U_q(\mathfrak{h})$ with
$$
K_\mu \cdot v = q^{(\lambda,\mu)} v, \qquad\qquad \text{for all }\mu\in\weights.
$$
It is well defined: if $\lambda\in i\hbar^{-1}\roots^\vee$ then for any $\mu\in\weights$ we have $q^{(\lambda,\mu)}=e^{2\pi \hbar (\lambda,\mu)}=1$.

\begin{defn} \label{def: categoryO}[\cite[Definition 3.1]{andersen2011category}, \cite[Definition 4.1]{voigt2017complex}]
A left module $ M $ over $ U_q(\mathfrak{g}) $ is said to belong to the \emph{BGG category} $ \O $ if 
\begin{enumerate}[a)]
\item $ M $ is finitely generated as a $ U_q(\mathfrak{g}) $-module. 
\item $ M $ is a weight module, that is, a direct sum of its weight spaces $ M_\lambda $ for $ \lambda \in \mathfrak{h}^*_q $. 
\item The action of $ U_q(\mathfrak{n}_+) $ on $ M $ is locally nilpotent, that is, for each $v\in M$, the subspace $ U_q(\mathfrak{n}_+) \cdot v$ of $M$ is finite dimensional.
\end{enumerate}
Morphisms in category $ \O $ are all $ U_q(\mathfrak{g}) $-linear maps. 
\end{defn} 

We list some basic properties of category $\O$.

\begin{prop}\label{prop: properties of O}
\begin{enumerate}
\item  $\O$ is closed under  submodules, quotient modules, and finite direct sums.
\item All  weight spaces of $M$ in $\O$ are finite dimensional.
\item All finite dimensional weight modules of $ U_q(\mathfrak{g}) $ are in $\O$.
\end{enumerate}
\end{prop}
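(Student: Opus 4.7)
The plan is to dispatch the three claims in roughly increasing order of difficulty, relying on the defining conditions of Definition \ref{def: categoryO}, the triangular decomposition $U_q(\mathfrak{n}_-)\otimes U_q(\mathfrak{h})\otimes U_q(\mathfrak{n}_+)\cong U_q(\mathfrak{g})$ recalled in Section 2.1, and the PBW basis of $U_q(\mathfrak{n}_-)$. Statement (3) will be the quick warm-up: a finite dimensional weight module is automatically finitely generated, and for any vector $v$ the subspace $U_q(\mathfrak{n}_+)\cdot v$ sits inside the finite dimensional ambient module, so the local nilpotence condition is immediate.

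For statement (2), I would first pass to a finite set of weight-vector generators $v_1,\dots,v_n$ of weights $\mu_1,\dots,\mu_n$, obtained by decomposing an arbitrary finite generating set into weight components. Using the triangular decomposition I would bound
\[
M_\lambda \subseteq \sum_{i=1}^{n}\bigl(U_q(\mathfrak{n}_-)\,U_q(\mathfrak{h})\,U_q(\mathfrak{n}_+)\,v_i\bigr)_\lambda,
\]
and then control each summand in three steps. The space $W_i := U_q(\mathfrak{n}_+)v_i$ is finite dimensional by local nilpotence and breaks into finitely many finite dimensional weight components $(W_i)_{\mu_i+\nu}$ for $\nu$ in a finite subset of $\roots^+$. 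Next, $U_q(\mathfrak{h})$ preserves each weight component, so it does not enlarge $W_i$. Finally, applying $U_q(\mathfrak{n}_-)$ contributes only through the homogeneous pieces $U_q(\mathfrak{n}_-)_{\beta}$ for $\beta\in -\roots^+$, and the PBW basis of $U_q(\mathfrak{n}_-)$ ensures that each such piece is finite dimensional. Thus $M_\lambda$ is a finite sum of finite dimensional pieces.

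For statement (1), finite direct sums and quotients are routine: each of the three defining conditions is preserved under these constructions (finite generation descends to quotients, the weight decomposition pushes through, and local nilpotence is inherited from the ambient module). The subtle case is submodules $N\subseteq M$. The weight decomposition is inherited because $U_q(\mathfrak{h})$ acts semisimply on $M$ and $N$ is $U_q(\mathfrak{h})$-stable, giving $N_\lambda = N\cap M_\lambda$; local nilpotence is inherited trivially. The main obstacle is finite generation of $N$, which I would resolve by invoking the standard fact that $U_q(\mathfrak{g})$ is a left Noetherian ring, a consequence of its realization as an iterated Ore extension via the PBW theorem, so that submodules of finitely generated $U_q(\mathfrak{g})$-modules are again finitely generated.
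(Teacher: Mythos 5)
The paper states this proposition without proof, treating it as standard background from \cite{andersen2011category} and \cite{voigt2017complex}, so there is no in-paper argument to compare against; your proof is the canonical one and it is correct.

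A few small remarks. For (3), your argument matches the paper's chosen formulation of condition (c) in Definition~\ref{def: categoryO} (local finiteness of $U_q(\mathfrak{n}_+)\cdot v$ rather than literal nilpotence), so inclusion in a finite dimensional ambient module does finish it immediately. For (2), the one step you leave implicit is the bookkeeping that, having fixed the target weight $\lambda$, only finitely many triples $(i,\nu,\beta)$ with $\mu_i+\nu+\beta=\lambda$ contribute: there are finitely many $i$, finitely many $\nu\in\roots^+$ with $(W_i)_{\mu_i+\nu}\neq 0$, and then $\beta=\lambda-\mu_i-\nu$ is determined (and contributes only if it lies in $-\roots^+$), with $\dim U_q(\mathfrak{n}_-)_\beta$ given by the Kostant partition function, hence finite; spelling this out would make the finite-sum claim airtight. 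For (1), invoking left Noetherianity of $U_q(\mathfrak{g})$ via the iterated Ore extension/PBW description is exactly the standard route (cf.\ the proof of the analogous statement in \cite{humphreys2008representations}, Theorem 1.1); note that in the presentation used here $U_q(\mathfrak{h})$ is the group algebra of $\weights\cong\mathbb{Z}^N$, i.e.\ a Laurent polynomial ring, which is Noetherian, and adjoining the root vectors with the Levendorskii--Soibelman straightening relations preserves Noetherianity. One further point worth making explicit in the submodule case: semisimplicity of the $U_q(\mathfrak{h})$-action gives $N=\bigoplus_\lambda (N\cap M_\lambda)$, which is also exactly what one needs to see that $M/N$ is again a weight module, so the quotient case rides on the same observation.
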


\begin{defn}\label{def: Verma module}[\cite[Definition 2.31]{voigt2017complex}]
The \emph{Verma module} $ M(\lambda) $ associated to $ \lambda \in \mathfrak{h}^*_q $ is the 
induced $ U_q(\mathfrak{g}) $-module 
\begin{equation}
M(\lambda) = U_q(\mathfrak{g}) \otimes_{U_q(\mathfrak{b}_+)} \mathbb{C}_\lambda
\end{equation}
where $ \mathbb{C}_\lambda $ denotes the one-dimensional $ U_q(\mathfrak{b}_+) $-module $ \mathbb{C} $ with the action induced from the character $ \chi_\lambda $ determined by
\begin{equation}
 \chi_\lambda(K_\mu) = q^{(\lambda,\mu)}  \text{ for all }\mu\in\weights, \text{ and }  \chi_\lambda(E_i) =0, ~i=1,\ldots, N.
\end{equation}
It is clear that $ M(\lambda) $ belongs to category $\O$.

$ M(\lambda) $ contains a unique maximal proper 
submodule $ I(\lambda) $, namely the linear span of all submodules not containing the highest weight  $ v_\lambda = 1 \otimes 1 \in M(\lambda) $. The resulting simple quotient module $ M(\lambda)/I(\lambda) $ will be denoted $ V(\lambda) $. It is again a module in $\O$.
\end{defn}

\begin{rem}In \cite{andersen2011category}, $ M(\lambda)$ and $ V(\lambda) $ are denoted by $\Delta_q(\lambda)$ and $L_q(\lambda)$ respectively.
\end{rem}

It is clear that every highest weight module of highest weight $ \lambda $ is isomorphic to a quotient of $ M(\lambda) $ and every simple highest weight module of highest weight $ \lambda $ is isomorphic to   $ V(\lambda) $ .


The following result characterizes finite dimensional weight modules of $ U_q(\mathfrak{g}) $
\begin{prop}\label{prop: finite dimensional modules}[\cite[Corollary 2.100]{voigt2017complex}]
We write $\mathbf{X}_q$ for the set of weights $\omega\in\mathfrak{h}_q^*$ satisfying
 $
  q^{(\omega,\alpha)} = \pm1$  for all $\alpha\in\roots$.
We define 
 \[
  \weights_q^+ = \weights^+ + \mathbf{X}_q \subset \mathfrak{h}_q^*.
 \]
Then every finite dimensional weight module over $U_q(\mathfrak{g})$ decomposes into a direct sum of irreducible highest weight modules $V(\lambda)$ for weights $\lambda\in\weights_q^+$.
\end{prop}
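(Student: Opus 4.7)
The plan is to mirror the classical proof of the highest-weight classification for semisimple Lie algebras, with two adjustments specific to the quantum setting: handling the sign ambiguity that appears because $K_i$-eigenvalues are not determined by their squares, and tracking weights in the quotient $\mathfrak{h}^*_q$ rather than in $\mathfrak{h}^*$. I would organize the argument into three steps.

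First, I would reduce to the case of simple modules by establishing complete reducibility: every finite-dimensional weight $U_q(\mathfrak{g})$-module is a direct sum of simples. The cleanest route is to use a quantum Casimir element to separate central characters of non-isomorphic highest-weight simples, combined with a direct vanishing of $\mathrm{Ext}^1(V(\lambda),V(\mu))$ among finite-dimensional simples with the same central character (where strict dominance of a highest weight in any nontrivial extension forces $\lambda=\mu$, after which one splits via the $K_\lambda$-eigenspace decomposition of the extension).

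Second, I would show every simple finite-dimensional weight module $M$ is a highest-weight module. Since $M$ has only finitely many weights, some weight $\lambda$ is maximal with respect to the $\roots^+$-ordering restricted to this finite set. Any nonzero $v_\lambda \in M_\lambda$ is then killed by every $E_i$, since $E_i v_\lambda$ would have strictly greater weight. So $v_\lambda$ generates a highest-weight quotient of $M(\lambda)$, and simplicity of $M$ forces $M \cong V(\lambda)$.

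Third, and this is where the bulk of the work lies, I would characterize the $\lambda \in \mathfrak{h}^*_q$ for which $V(\lambda)$ is finite-dimensional. Fix a simple root $\alpha_i$. The cyclic subspace $U_{q_i}(\mathfrak{sl}_2) \cdot v_\lambda$ inside $V(\lambda)$, generated over the quantum $\mathfrak{sl}_2$-subalgebra with generators $E_i, F_i, K_i^{\pm 1}$, must be finite-dimensional, so there is a minimal $m_i \geq 0$ with $F_i^{m_i+1} v_\lambda = 0$. Applying $E_i$ to this relation and using the standard commutation identity
\begin{equation*}
[E_i, F_i^{m_i+1}] = [m_i+1]_{q_i}\, F_i^{m_i}\, \frac{q_i^{-m_i}K_i - q_i^{m_i}K_i^{-1}}{q_i - q_i^{-1}},
\end{equation*}
together with $E_i v_\lambda = 0$ and $K_i v_\lambda = q^{(\lambda,\alpha_i)} v_\lambda$, forces $q^{2(\lambda,\alpha_i)} = q_i^{2m_i}$, hence $q^{(\lambda,\alpha_i)} = \pm q_i^{m_i}$. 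Writing $\mu = \sum_i m_i \varpi_i \in \weights^+$ and $\omega = \lambda - \mu$, the conditions $q^{(\omega,\alpha_i)} = \pm 1$ at each simple root extend by $\mathbb{Z}$-bilinearity to $q^{(\omega,\alpha)} = \pm 1$ for every $\alpha \in \roots$, placing $\omega \in \mathbf{X}_q$ and $\lambda \in \weights_q^+$. Conversely, for each such $\lambda$ one obtains a finite-dimensional $V(\lambda)$ by twisting the well-known integrable irreducible of classical highest weight $\mu$ by the one-dimensional character $\omega$.

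The main obstacle I anticipate is the converse in the third step: integrability of the $(E_i,F_i,K_i)$-triple at each simple root does not obviously force global finite-dimensionality of $V(\lambda)$. The standard fix is to invoke the Lusztig braid-group action by the operators $T_i$ on any module that is integrable for each rank-one subalgebra, which shows the weight support of $V(\lambda)$ is stable under the Weyl group $W$; since this support lies in $\lambda - \roots^+$, its intersection with the dominant chamber is finite, so the whole support is finite and, by finiteness of weight spaces, $\dim V(\lambda) < \infty$.
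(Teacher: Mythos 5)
The paper does not prove this proposition; it simply cites Corollary 2.100 of Voigt--Yuncken, so there is no in-paper argument to compare against. Your sketch is an essentially correct reconstruction of the standard highest-weight-theory argument behind that citation, and the key computation checks out: from $F_i^{m_i+1}v_\lambda=0$ with $m_i$ minimal, the identity
\[
[E_i,F_i^{m+1}]=[m+1]_{q_i}\,F_i^{m}\,\frac{q_i^{-m}K_i-q_i^{m}K_i^{-1}}{q_i-q_i^{-1}}
\]
together with $[m_i+1]_{q_i}\neq 0$ (since $q$ is not a root of unity) forces $q^{2(\lambda,\alpha_i)}=q_i^{2m_i}$, hence $q^{(\lambda,\alpha_i)}=\pm q_i^{m_i}$, and $\omega=\lambda-\sum_i m_i\varpi_i$ satisfies $q^{(\omega,\alpha)}=\pm1$ for all $\alpha\in\roots$ by bilinearity. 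One substantive comment: the proposition as stated only requires the forward direction of your third step, namely that a finite-dimensional simple has highest weight in $\weights_q^+$. It does not assert that every $V(\lambda)$ with $\lambda\in\weights_q^+$ is finite-dimensional, so the ``main obstacle'' you flag (integrability of $V(\lambda)$ via the Lusztig operators $T_i$) is not actually needed here; what is needed is your Steps 1--2 plus the $\mathfrak{sl}_2$-computation. Step 1 (complete reducibility via a quantum Casimir and $\mathrm{Ext}^1$-vanishing between finite-dimensional simples) is where most of the technical work from the reference would live; your sketch of it is a bit compressed but is the right idea.
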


Simple modules $ V(\lambda) $ are the building blocks of modules in $\O$.

\begin{prop}\label{prop: Jordan-Holder}[\cite[Theorem 4.3]{voigt2017complex}]
Every module $ M \in \O $ is both Artinian and Noetherian. Hence every y module $ M \in \O $
has a Jordan-H\"older decomposition series $ 0 = M_0 \subset M_1 \subset \cdots \subset M_n = M $ such that all subquotients $ M_{j + 1}/M_j $ are simple 
highest weight modules. Moreover, the number of subquotients isomorphic to $ V(\lambda) $ for $ \lambda \in \mathfrak{h}^*_q $ is independent 
of the decomposition series
and will be denoted by $ [M: V(\lambda)] $.
\end{prop}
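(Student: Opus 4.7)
The plan is to reduce the proposition to two facts: (a) every Verma module $M(\lambda)$ has finite length, and (b) every $M\in\O$ is a quotient of a module admitting a finite filtration by Verma modules. Granted (a) and (b), $M$ itself has finite length, which gives both the Artinian and Noetherian properties simultaneously. The existence of a Jordan-H\"older series with simple highest weight subquotients is then automatic, and the well-definedness of the multiplicities $[M:V(\lambda)]$ follows from the classical Jordan-H\"older theorem in the abelian category $\O$, together with the fact that $V(\lambda)\cong V(\mu)$ forces $\lambda=\mu$ (the highest weight is recoverable from the module as an isomorphism invariant).

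For (b), I would use the triangular decomposition. By finite generation, pick weight-vector generators $v_1,\ldots,v_r$ of $M$. By local nilpotency of $U_q(\mathfrak{n}_+)$, the subspace $V := \sum_i U_q(\mathfrak{n}_+)\cdot v_i$ is finite-dimensional, and it is $U_q(\mathfrak{h})$-stable (since the $v_i$ are weight vectors and each $K_\mu$ commutes with $U_q(\mathfrak{n}_+)$ up to a scalar), hence a finite-dimensional $U_q(\mathfrak{b}_+)$-submodule of $M$. The triangular decomposition gives a surjection $U_q(\mathfrak{g})\otimes_{U_q(\mathfrak{b}_+)}V\twoheadrightarrow M$. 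As a finite-dimensional $U_q(\mathfrak{b}_+)$-module, $V$ admits a composition series whose simple factors are one-dimensional characters $\mathbb{C}_{\lambda_k}$ (the $E_j$'s act nilpotently on any simple finite-dimensional module, and $U_q(\mathfrak{h})$ is commutative), and inducing this composition series up to $U_q(\mathfrak{g})$ yields a filtration of $U_q(\mathfrak{g})\otimes_{U_q(\mathfrak{b}_+)}V$ by Verma modules $M(\lambda_k)$.

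For (a), the composition factors of $M(\lambda)$ are simple highest weight modules $V(\mu)$ with $\mu\leq\lambda$, and the multiplicity $[M(\lambda):V(\mu)]$ is bounded above by $\dim M(\lambda)_\mu=p(\lambda-\mu)$, the Kostant partition function, which is finite. To finish, I would invoke the linkage principle for $U_q(\mathfrak{g})$: the $\mu$'s appearing as highest weights of composition factors of $M(\lambda)$ lie in a finite subset of $\mathfrak{h}_q^*$, essentially the dot-orbit of $\lambda$ under the relevant integral Weyl group, so only finitely many isomorphism types $V(\mu)$ can appear.

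The main obstacle is this linkage principle used in (a): it packages a nontrivial Harish-Chandra-type description of the center $Z(U_q(\mathfrak{g}))$ together with the fact that it acts through characters on Verma modules, and without it one cannot directly bound the number of composition factors even though each individual multiplicity is finite. Since this proposition is imported from \cite{voigt2017complex}, I would cite the linkage principle from there rather than reprove it, and concentrate on the (more elementary) pieces (b) and the multiplicity bound in (a).
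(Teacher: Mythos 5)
Your outline reconstructs the standard proof of this result (which the paper itself does not prove, only cites from \cite[Theorem 4.3]{voigt2017complex}), and the two key ingredients you identify---a surjection from a Verma-filtered induced module in part (b), and the linkage principle plus the Kostant partition bound in part (a)---are indeed what is needed. Part (b) is carefully and correctly argued; the only point worth spelling out is that the induction functor $U_q(\mathfrak{g})\otimes_{U_q(\mathfrak{b}_+)}(-)$ is exact because the triangular decomposition makes $U_q(\mathfrak{g})$ free as a right $U_q(\mathfrak{b}_+)$-module, which you implicitly use when transporting the $U_q(\mathfrak{b}_+)$-composition series of $V$ to a Verma filtration.

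Part (a) as written is slightly circular: you speak of ``the composition factors of $M(\lambda)$'' and ``the multiplicity $[M(\lambda):V(\mu)]$'' before finite length is established, so these objects are not yet known to exist. The fix is to run the same bound against an arbitrary strictly increasing chain $0=N_0\subsetneq N_1\subsetneq\cdots\subsetneq N_k\subseteq M(\lambda)$. Each nonzero quotient $N_i/N_{i-1}$ lies in $\O$, hence has a maximal weight $\mu_i$ and a highest weight vector there; by the Harish-Chandra/linkage principle $\mu_i$ is $\hat W$-linked to $\lambda$, and $\mu_i\leq\lambda$. Lifting these highest weight vectors to $v_i\in N_i\setminus N_{i-1}$ and grouping them by weight, the $v_i$ with a common weight $\mu$ are linearly independent in $M(\lambda)_\mu$ because the $N_i$ are nested. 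Therefore $k\leq\sum_{\mu}\dim M(\lambda)_\mu$, where the sum ranges over the finitely many $\mu$ that are $\hat W$-linked to $\lambda$ and $\leq\lambda$ (finiteness of $\hat W$ is exactly Definition \ref{def:extended_Weyl_group} and Lemma \ref{lemma: multiplicities of Jordan-Holder}). This bounds the length of any chain a priori and gives finite length honestly. With that repair the argument is complete and matches the intended proof; the deep input is, as you say, the linkage principle for $U_q(\mathfrak{g})$, which one should cite rather than reprove.
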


To further study $ [M(\mu): V(\lambda)]  $ for a Verma module $M(\mu)$  we need the following  concept.

\begin{defn} 
	\label{def:extended_Weyl_group}[\cite[Section 8.3.2]{joseph1995quantum}, \cite[Definition 2.125]{voigt2017complex}]
	We define 
	\begin{equation}
	\mathbf{Y}_q = \{\zeta\in\mathfrak{h}_q^* \mid 2\zeta=0\}\cong\frac{1}{2} i\hbar^{-1} \roots^\vee / i\hbar^{-1} \roots^\vee.
	\end{equation}
	It is clear that $W$ acts on $\mathbf{Y}_q$. The \emph{extended Weyl group} $ \hat{W} $ is defined as the semidirect product 
		\begin{equation}
	\hat{W} = \mathbf{Y}_q \rtimes W
	\end{equation} 
	with respect to the action of $ W $ on $ \mathbf{Y}_q $. $\hat{W}$ is a finite group.
\end{defn} 

 Explicitly, the product in $ \hat{W} $ is $(i\zeta, v)(i\eta, w) = (i\zeta + i v \eta, vw)$.
We define two actions of $\hat{W}$ on $\mathfrak{h}_q^*$ by $
  (i\zeta,w)\lambda = w\lambda+i\zeta$
and
\begin{equation}
(i\zeta,w)\cdot\lambda  = w\cdot\lambda+\zeta = w(\lambda+\rho) - \rho +i\zeta,
\end{equation}
for $\lambda\in\mathfrak{h}_q^*$, where $\rho$ is the half sum of all positive roots.  The latter is called the \emph{shifted action} of $\hat{W}$ on $\mathfrak{h}_q^*$.

\begin{rem}
See \cite[Theorem 2.128]{voigt2017complex} for the relation between the $\frac{1}{2} i\hbar^{-1} \roots^\vee$-translation and the \emph{Harish-Chandra map}, which  plays an important role in the representation theory of $U_q(\mathfrak{g})$.
\end{rem}

\begin{defn} \label{defwlinkage}
We say that $ \mu, \lambda \in \mathfrak{h}^*_q $ are $ \hat{W} $-linked if $ \hat{w} \cdot \lambda = \mu $ 
for some $ \hat{w} \in \hat{W} $. 
\end{defn}

\begin{defn}\label{def: partial order on weights}
	We define a partial order $ \geq $ on $ \mathfrak{h}^*_q $ by saying that $ 
	\lambda \geq \mu$ if $ \lambda - \mu \in \roots^+ $. 
	Here we are identifying $\roots^+$ with its image in $ \mathfrak{h}^*_q $. 
\end{defn}

\begin{lemma}\label{lemma: multiplicities of Jordan-Holder}[\cite[Section 4.1.1]{voigt2017complex}]
For any $ \mu \in \mathfrak{h}^*_q $ we  have $ [M(\mu): V(\mu)] = 1 $, and moreover $ [M(\mu): V(\lambda)] = 0 $ unless $\lambda\leq \mu$ and $\lambda$ is $\hat{W}$-linked to $\mu$. Since $\hat{W}$ is a finite group, for each $ \mu \in \mathfrak{h}^*_q $ there exists only finitely many $\lambda\in \mathfrak{h}^*_q$ such that $ [M(\mu): V(\lambda)] \neq 0 $.
\end{lemma}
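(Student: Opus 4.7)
The plan is to reduce the claim to two ingredients: (a) a weight-space computation inside $M(\mu)$, and (b) the quantum Harish-Chandra isomorphism for $Z(U_q(\mathfrak{g}))$. For (a), the triangular decomposition $U_q(\mathfrak{n}_-)\otimes U_q(\mathfrak{h})\otimes U_q(\mathfrak{n}_+)\simto U_q(\mathfrak{g})$ implies that $M(\mu)=U_q(\mathfrak{n}_-)\cdot v_\mu$, so $M(\mu)$ is spanned by vectors $F_{i_1}\cdots F_{i_k} v_\mu$, each of weight $\mu-(\alpha_{i_1}+\cdots+\alpha_{i_k})\in \mu-\roots^+$. Hence every weight $\lambda$ of $M(\mu)$ satisfies $\lambda\leq\mu$ and $\dim M(\mu)_\mu=1$. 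Every composition factor $V(\lambda)$ of $M(\mu)$ has its weights among those of $M(\mu)$, so in particular $\lambda\leq\mu$. The same argument applied to $V(\lambda)$ itself gives $V(\lambda)_\mu=0$ unless $\mu\leq\lambda$.

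Taking a Jordan--H\"older filtration from Proposition \ref{prop: Jordan-Holder} and counting $\mu$-weight spaces summand-wise yields
\[
1 \;=\; \dim M(\mu)_\mu \;=\; \sum_\lambda [M(\mu):V(\lambda)]\cdot\dim V(\lambda)_\mu.
\]
Nonzero terms require both $\lambda\leq\mu$ and $\mu\leq\lambda$, forcing $\lambda=\mu$ and $[M(\mu):V(\mu)]=1$. For the $\hat{W}$-linkage I will pass to central characters: the center $Z(U_q(\mathfrak{g}))$ acts by a scalar on the highest weight vector of each $M(\mu)$ and each $V(\lambda)$, producing central characters $\chi_\mu,\chi_\lambda:Z(U_q(\mathfrak{g}))\to\mathbb{C}$. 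Since a composition factor inherits the ambient character, $[M(\mu):V(\lambda)]\neq 0$ forces $\chi_\mu=\chi_\lambda$. The quantum Harish-Chandra isomorphism cited in the remark after Definition \ref{def:extended_Weyl_group}, namely \cite[Theorem 2.128]{voigt2017complex}, then translates the equality $\chi_\mu=\chi_\lambda$ into the statement that $\mu$ and $\lambda$ lie in the same $\hat{W}$-orbit under the shifted action. Finiteness follows because $|\hat{W}|<\infty$.

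The principal obstacle is the enlargement from $W$ to $\hat{W}=\mathbf{Y}_q\rtimes W$: translation of a weight by an element of $i\hbar^{-1}\roots^\vee$ does not change any of the scalars $q^{(\lambda,\mu)}$ for $\mu\in\weights$, so $U_q(\mathfrak{h})$ carries extra symmetries beyond those visible in the classical setting, and the center is consequently $\hat{W}$-invariant under the shifted action rather than merely $W$-invariant. I intend to invoke this quantum Harish-Chandra theorem as a black box rather than reprove it, at which point the linkage and finiteness assertions follow immediately.
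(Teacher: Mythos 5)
Your argument is correct and is essentially the standard proof that the paper delegates to \cite[Section 4.1.1]{voigt2017complex}: the highest-weight computation giving $\dim M(\mu)_\mu = 1$ and $\lambda \leq \mu$ for any composition factor, combined with the central-character/quantum Harish-Chandra argument to get $\hat{W}$-linkage, with finiteness coming from $|\hat{W}| < \infty$ together with the constraint $\lambda \leq \mu$. The only imprecision is phrasing the conclusion of the Harish-Chandra theorem as ``the center is $\hat{W}$-invariant'' --- it is the image of the Harish-Chandra map that consists of $\hat{W}$-invariants, so that $\chi_\mu = \chi_\lambda$ iff $\mu$ and $\lambda$ lie in the same shifted $\hat{W}$-orbit --- but this does not affect the substance of the argument.
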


\section{Formal characters of modules in category $\O$}
\subsection{Basic properties of formal characters}

\begin{defn}\label{def: formal characters}
We define the formal character of $ M $ in $\O$ as the formal sum
\begin{equation}
\ch(M) = \sum_{\lambda \in \mathfrak{h}^*_q} \dim(M_\lambda) e^\lambda.
\end{equation}
\end{defn}

By Proposition \ref{prop: properties of O} any module $ M $ in category $ \O $ satisfies $ \dim M_\lambda < \infty $ for all $ \lambda \in \mathfrak{h}^*_q $. So $\ch(M)$  is well-defined. We also have the following more general definition:

\begin{defn}\label{def: convolution ring}
Let $\Chi$  be the ring of formal sums of the form $\sum_{\lambda\in\mathfrak{h}_q^*} f(\lambda) e^\lambda$ where $f:\mathfrak{h}^*_q \to \ZZ$ is any integer valued function whose support lies in a finite union of sets of the form $\nu - \roots^+$ with $\nu\in\mathfrak{h}_q^*$.  The product in $\Chi$ is the  convolution product given by
\[
  \left(\sum_{\lambda\in\mathfrak{h}_q^*} f(\lambda)e^\lambda \right)
  \left(\sum_{\mu\in\mathfrak{h}_q^*} g(\mu)e^\mu \right)   = \sum_{\lambda,\mu\in\mathfrak{h}_q^*} f(\lambda) g(\mu) e^{\lambda+\mu}.
\]
It is clear that the right hand side is still in  $\Chi$.
\end{defn}

\begin{defn}\label{Kostant partition function}
We introduce an element $p\in \Chi$ as 
\begin{equation}
   p = \prod_{\beta\in \positiveroots} \left(\sum_{m=0}^\infty e^{-m\beta}\right) .
\end{equation}
\end{defn}

\begin{lemma}\label{lemma: formal character of Verma modules}[\cite[Proposition 2.68]{voigt2017complex}]
For each $ \mu \in \mathfrak{h}^*_q $, the formal character of the Verma module  $ M(\mu) $ is the convolution product of $e^\mu$ and $p$:
\begin{equation}
  \ch(M(\mu)) = e^\mu p.
\end{equation}
\end{lemma}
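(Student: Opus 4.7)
The plan is to compute the weight space decomposition of $M(\mu)$ directly using the triangular (PBW) decomposition for $U_q(\mathfrak{g})$ and then match it against the expansion of $e^\mu p$ in the ring $\Chi$.

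First I would use the linear isomorphism $U_q(\mathfrak{n}_-) \otimes U_q(\mathfrak{h}) \otimes U_q(\mathfrak{n}_+) \cong U_q(\mathfrak{g})$ recalled from \cite[Proposition 2.14]{voigt2017complex}, together with the definition $M(\mu) = U_q(\mathfrak{g}) \otimes_{U_q(\mathfrak{b}_+)} \mathbb{C}_\mu$, to conclude that the multiplication map gives a linear isomorphism
\begin{equation*}
U_q(\mathfrak{n}_-) \otimes \mathbb{C}_\mu \;\xrightarrow{\sim}\; M(\mu), \qquad x \otimes 1 \mapsto x \cdot v_\mu,
\end{equation*}
where $v_\mu = 1 \otimes 1$ is the canonical highest weight vector. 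This reduces the problem to computing the weight space dimensions of $U_q(\mathfrak{n}_-)$ under the adjoint action, shifted by $\mu$.

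Next I would invoke the quantum PBW theorem for $U_q(\mathfrak{n}_-)$: after fixing a reduced expression for the longest Weyl group element, there exist root vectors $F_\beta \in U_q(\mathfrak{n}_-)$ of weight $-\beta$ for every $\beta \in \positiveroots$ such that the ordered monomials $\prod_{\beta \in \positiveroots} F_\beta^{m_\beta}$ with $m_\beta \in \mathbb{Z}_{\geq 0}$ form a vector space basis of $U_q(\mathfrak{n}_-)$. Since the action of $K_\nu$ on $F_\beta$ is by $q^{-(\nu,\beta)}$ and $K_\nu \cdot v_\mu = q^{(\mu,\nu)} v_\mu$, the monomial $\prod_\beta F_\beta^{m_\beta} \cdot v_\mu$ is a weight vector of weight $\mu - \sum_{\beta} m_\beta \beta$. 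Consequently
\begin{equation*}
\dim M(\mu)_\lambda \;=\; \#\Bigl\{(m_\beta)_{\beta \in \positiveroots} \in \mathbb{Z}_{\geq 0}^{\positiveroots} \;\Big|\; \sum_\beta m_\beta \beta = \mu - \lambda\Bigr\},
\end{equation*}
which is exactly the Kostant partition function $P(\mu-\lambda)$, with the convention that it vanishes unless $\mu - \lambda \in \roots^+$.

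Finally I would match this against the right-hand side. Expanding the convolution product in $\Chi$,
\begin{equation*}
e^\mu p \;=\; e^\mu \prod_{\beta \in \positiveroots}\Bigl(\sum_{m \geq 0} e^{-m\beta}\Bigr) \;=\; \sum_{(m_\beta)} e^{\mu - \sum_\beta m_\beta \beta} \;=\; \sum_{\lambda \in \mathfrak{h}^*_q} P(\mu-\lambda)\, e^\lambda,
\end{equation*}
which coincides with $\ch(M(\mu))$ by the previous paragraph. I should also check that $e^\mu p$ genuinely lies in $\Chi$: its support is contained in the single set $\mu - \roots^+$, which is of the required form.

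The only nontrivial input here is the quantum PBW theorem giving a basis of $U_q(\mathfrak{n}_-)$ consisting of ordered monomials in root vectors of the prescribed weights; once that is in hand the computation is mechanical. This is standard and can be cited from the references used in the paper, so I do not expect a real obstacle, only the bookkeeping needed to ensure that the triangular decomposition descends correctly to the induced module.
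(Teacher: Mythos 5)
Your proposal is correct, and it is the standard argument. The paper itself does not give a proof of this lemma --- it simply cites \cite[Proposition 2.68]{voigt2017complex} --- so there is no in-paper argument to compare against; the cited reference proves it in essentially the way you describe, namely by reducing to the vector-space isomorphism $M(\mu) \cong U_q(\mathfrak{n}_-) \otimes \mathbb{C}_\mu$ via the triangular decomposition and then using the quantum PBW basis of $U_q(\mathfrak{n}_-)$ to compute weight-space dimensions in terms of the Kostant partition function. Your weight bookkeeping is right: from $K_\nu F_j K_\nu^{-1} = q^{-(\nu,\alpha_j)} F_j$ the PBW root vectors $F_\beta$ carry weight $-\beta$, so $\prod_\beta F_\beta^{m_\beta} v_\mu$ has weight $\mu - \sum_\beta m_\beta \beta$, and the count of such monomials with fixed weight is the Kostant partition function, matching the coefficient of $e^\lambda$ in $e^\mu p$. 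The only point worth making explicit is that the quantum PBW theorem (existence of root vectors $F_\beta$ for non-simple $\beta$ and the ordered-monomial basis) is itself a nontrivial theorem that you are importing, but it is completely standard for $q$ not a root of unity, which holds here since $q = e^h$ with $h \in \mathbb{R}^\times$.
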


By Lemma \ref{lemma: multiplicities of Jordan-Holder} we have
\begin{equation}
\ch(M(\mu)) = \sum_{\lambda \in \mathfrak{h}^*_q} [M(\mu): V(\lambda)] \ch(V(\lambda)). 
\end{equation}
where  $ [M(\mu): V(\mu)] = 1 $ and $ [M(\mu): V(\lambda)] = 0 $ unless $\lambda\leq \mu$ and $\lambda$ is $\hat{W}$-linked to $\mu$. 

We can obtain the following well-known result on $\ch(V(\mu))$ by inverting the matrix $ [M(\mu): V(\lambda)] $:
\begin{lemma}\label{lemma: formal character of simple module}
For each $ \mu \in \mathfrak{h}^*_q $, the formal character of the simple highest weight module $V(\mu)$ can be expressed as
\begin{equation}\label{eq: character of V}
\ch(V(\mu)) = \sum_{\lambda \in \mathfrak{h}^*_q} m_{\lambda,\mu}\ch(M(\lambda)) =\sum_{\lambda \in \mathfrak{h}^*_q} m_{\mu,\lambda}e^\lambda p
\end{equation}
where $m_{\mu,\lambda}$ are integers such that $m_{\mu,\mu}=1$ and $m_{\mu,\lambda}=0$ unless $\lambda\leq \mu$ and $\lambda$ is $\hat{W}$-linked to $\mu$.
\end{lemma}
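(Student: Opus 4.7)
The plan is to invert the triangular system already recorded in Lemma \ref{lemma: multiplicities of Jordan-Holder}. The expansion $\ch(M(\mu)) = \sum_\lambda [M(\mu):V(\lambda)]\ch(V(\lambda))$ is a finite sum in which $\lambda$ is constrained to lie in the (finite) $\hat{W}$-orbit of $\mu$ under the shifted action and to satisfy $\lambda\leq\mu$, and the coefficient of $\ch(V(\mu))$ equals $1$. This is exactly the setup in which a triangular inversion can be performed over $\ZZ$.

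First, fix $\mu$ and introduce the finite set $S_\mu = \{\lambda\in\mathfrak{h}_q^* : \lambda\leq\mu \text{ and } \lambda \text{ is } \hat{W}\text{-linked to } \mu\}$. For any $\nu\in S_\mu$, every $\lambda$ occurring in $\ch(M(\nu))$ is $\hat{W}$-linked to $\nu$, hence to $\mu$ (since $\hat{W}$-linkage is an equivalence relation under the shifted action), and satisfies $\lambda\leq\nu\leq\mu$, so $\lambda\in S_\mu$ as well. Consequently the entire system of Verma-character identities for $\nu\in S_\mu$ is encoded in the finite square matrix $A = \bigl([M(\nu):V(\lambda)]\bigr)_{\nu,\lambda\in S_\mu}$.

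Next, choose a linear extension of the partial order $\leq$ restricted to $S_\mu$. With respect to this linear order, $A$ is triangular with $1$'s on the diagonal, so it is invertible over $\ZZ$ and its inverse $B$ has the same triangular shape and diagonal entries. Define $m_{\mu,\lambda} := B_{\mu,\lambda}$ (and set $m_{\mu,\lambda}=0$ for $\lambda\notin S_\mu$). Then $m_{\mu,\mu}=1$ and $m_{\mu,\lambda}=0$ unless $\lambda\leq\mu$ and $\lambda$ is $\hat{W}$-linked to $\mu$; multiplying the Verma-character relations by $B$ gives the first identity $\ch(V(\mu)) = \sum_{\lambda} m_{\mu,\lambda}\ch(M(\lambda))$. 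Finally, the second identity is obtained by substituting $\ch(M(\lambda)) = e^\lambda p$ from Lemma \ref{lemma: formal character of Verma modules}.

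There is no real obstacle: the only inputs beyond elementary linear algebra are finiteness of the linkage orbit and triangularity of the multiplicities, both supplied by Lemma \ref{lemma: multiplicities of Jordan-Holder}. The one mild subtlety worth verifying carefully is closure, namely that the relevant coefficients never leave $S_\mu$, so that the inversion really takes place inside a single finite matrix rather than in an infinite formal expansion; the closure observation above makes this rigorous.
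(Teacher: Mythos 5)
Your proof is correct and follows exactly the route the paper has in mind: the paper states the result follows ``by inverting the matrix $[M(\mu):V(\lambda)]$,'' and you have simply carried out that triangular inversion in full detail, including the closure check that the linkage class $S_\mu$ is stable under the recursion. No substantive difference; the extra care about finiteness and well-definedness of the inverse is a welcome expansion of what the paper leaves implicit.
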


\begin{rem}
If $ \mu \in  \weights^+ $  the set of dominant integral weights, then \cite[Proposition 4.4]{voigt2017complex} gives a more precise formula than \eqref{eq: character of V}.
\end{rem}

\begin{cor}\label{coro: formal character of general module}
For each $M\in \O$, there exists a finite set $\{\mu_1,\ldots, \mu_m\}\subset \mathfrak{h}^*_q$ such that
\begin{equation}\label{eq: character of general}
\ch(M)=\sum_{i=1}^m\sum_{\lambda \in \mathfrak{h}^*_q}[M:V(\mu_i)]m_{\mu_i,\lambda}e^\lambda p.
\end{equation}
where $m_{\mu_i,\lambda}$ are integers such that $m_{\mu_i,\mu_i}=1$ and $m_{\mu_i,\lambda}=0$ unless $\lambda\leq \mu_i$ and $\lambda$ is $\hat{W}$-linked to $\mu_i$.
\end{cor}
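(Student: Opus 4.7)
The statement is essentially a direct package of two facts already proved in the excerpt, so the plan is short. I would proceed as follows.

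First, I would invoke the Jordan-Hölder result (Proposition \ref{prop: Jordan-Holder}) to pick a finite composition series $0 = M_0 \subset M_1 \subset \cdots \subset M_n = M$ whose subquotients $M_{j+1}/M_j$ are simple highest weight modules, say $V(\nu_{j+1})$. Let $\{\mu_1,\ldots,\mu_m\}$ be the distinct weights appearing among $\nu_1,\ldots,\nu_n$; this is a finite subset of $\mathfrak{h}^*_q$, and by definition the number of indices $j$ with $\nu_j = \mu_i$ equals $[M:V(\mu_i)]$.

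Second, I would use that the formal character is additive on short exact sequences. This follows because every weight space is finite dimensional (Proposition \ref{prop: properties of O}(2)) and dimension is additive; hence
\begin{equation*}
\ch(M) \;=\; \sum_{j=1}^{n} \ch(M_j/M_{j-1}) \;=\; \sum_{i=1}^{m} [M:V(\mu_i)]\,\ch(V(\mu_i)).
\end{equation*}

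Third, I would substitute the formula from Lemma \ref{lemma: formal character of simple module} for each $\ch(V(\mu_i))$, namely $\ch(V(\mu_i)) = \sum_{\lambda\in\mathfrak{h}^*_q} m_{\mu_i,\lambda}\,e^\lambda p$, with $m_{\mu_i,\mu_i}=1$ and $m_{\mu_i,\lambda} = 0$ unless $\lambda \le \mu_i$ and $\lambda$ is $\hat{W}$-linked to $\mu_i$. Plugging this into the previous display yields exactly \eqref{eq: character of general}.

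There is essentially no hard step here; the only thing worth checking is that the resulting double sum is a legitimate element of $\Chi$ in the sense of Definition \ref{def: convolution ring}. This is automatic: by Lemma \ref{lemma: multiplicities of Jordan-Holder} the inner sum over $\lambda$ has finite support (only finitely many $\lambda$ are $\hat{W}$-linked to $\mu_i$ and $\le \mu_i$), and the support of each $e^\lambda p$ sits in $\lambda - \roots^+$, so the whole expression is supported in a finite union of sets $\nu - \roots^+$. The main conceptual point, if any, is just recognizing that character additivity together with the finiteness built into the Jordan-Hölder filtration collapses what could have been an infinite sum into the finite one indexed by $\{\mu_1,\ldots,\mu_m\}$.
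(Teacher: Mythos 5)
Your proof is correct and is the intended one: the corollary is precisely the combination of Jordan--H\"older (Proposition \ref{prop: Jordan-Holder}), additivity of $\ch$ along a composition series, and the formula for $\ch(V(\mu))$ in Lemma \ref{lemma: formal character of simple module}. The extra check that the double sum lands in $\Chi$ is a sensible clarification but does not change the substance.
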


\begin{rem}
Since $\hat{W}$ is a finite group, the sums on the right hand side of \eqref{eq: character of V} and \eqref{eq: character of general} are both finite.
\end{rem}

\subsection{Reduced rational expressions of formal characters of modules in $\O$}
Notice that we can write the formal character $p = \prod_{\beta\in \positiveroots} \left(\sum_{m=0}^\infty e^{-m\beta}\right)$ as 
\begin{equation}
  p =  \frac{1}{\prod_{\beta\in \positiveroots}(1-e^{-\beta})}
\end{equation}
so by Corollary \ref{coro: formal character of general module} for each $M\in \O$, we can write its formal character as
\begin{equation}\label{eq: unreduced rational formal character}
\ch(M)=\frac{\sum_{i=1}^m\sum_{\lambda \in \mathfrak{h}^*_q}[M:V(\mu_i)]m_{\mu_i,\lambda}e^\lambda}{\prod_{\beta\in \positiveroots}(1-e^{-\beta})}
\end{equation}

We want to simplify $\ch(M)$ to obtain a reduced fraction, which needs some work because the ring $\Chi$ is not a UFD.

 Let $\mathcal{S}$ be the ring of $\mathbb{Z}$-coefficient polynomials generated by $e^{-\alpha_i}$, $i=1,\ldots,N$, where $\{\alpha_1,\ldots, \alpha_N\}$ is the set of simple roots. It is clear that $\prod_{\beta\in \positiveroots}(1-e^{-\beta})\in \mathcal{S}$    but 
 $$\sum_{i=1}^m\sum_{\lambda \in \mathfrak{h}^*_q}[M:V(\mu_i)]m_{\mu_i,\lambda}e^\lambda$$
is not necessarily contained in $ \mathcal{S}$.

 We have the following definition.

\begin{defn}\label{def: reduced rational form}
Let $\Chi$ be as in Definition \ref{def: convolution ring}. We say that $a\in \Chi$ can be written in  reduced rational form if  there exists a subset $T_a\subset \positiveroots$ and a finite collection  $\{\mu_1,\ldots, \mu_m\}\subset \mathfrak{h}^*_q$ such that
\begin{equation}\label{eq: reduced form}
a=\frac{\sum_{i=1}^me^{\mu_i}f_i}{\prod_{\beta\in T_a}(1-e^{-\beta})^{n_{\beta}}}
\end{equation}
where
\begin{enumerate}
\item $\mu_i-\mu_j$ is not in the root lattice  $ \roots$ for each $i\neq j$;
\item  $f_i$ is a  polynomial in $\mathcal{S}$ with nonzero constant term for each $i$;
\item $n_{\beta}$ is a positive integer for each $\beta\in T_a$;
\item The numerator and denominator of \eqref{eq: reduced form} are coprime. More precisely, for each $\beta\in T_a$, there exists an $f_i$ in the numerator such that $1-e^{-\beta}$ is not a factor of $f_i$.
\end{enumerate}
We call the set $T_a$ the denominator roots of $a$.
\end{defn}

\begin{lemma}\label{lemma: reduced rational form is unique}
For any $a\in \Chi$, the reduced rational form of $a$ is unique if exists.
\end{lemma}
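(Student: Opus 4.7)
The plan is to cross-multiply the two reduced expressions and then proceed in two stages: first use $\roots$-coset decomposition in $\Chi$ to match the weights $\mu_i$ up to equality, and then use the UFD structure of the polynomial subring $\mathcal{S}=\mathbb{Z}[e^{-\alpha_1},\ldots,e^{-\alpha_N}]\subset\Chi$ together with the coprimality conditions (2) and (4) to match the denominator data and the numerators.

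Suppose $a$ has two reduced rational forms, the first with data $((\mu_i,f_i)_{i=1}^m,T_a,(n_\beta))$ and the second with data $((\mu'_j,f'_j)_{j=1}^{m'},T'_a,(n'_\beta))$. Set $F=\sum_i e^{\mu_i}f_i$, $P=\prod_{\beta\in T_a}(1-e^{-\beta})^{n_\beta}$, and $F',P'$ analogously. Cross-multiplication yields $FP'=F'P$ in the integral domain $\Chi$. Because each $f_i$ and $P'$ lie in $\mathcal{S}$ and so have support in $-\roots^+$, the term $e^{\mu_i}f_iP'$ is supported inside the single coset $\mu_i+\roots$. Since the $\mu_i$ lie in pairwise distinct cosets (and likewise for the $\mu'_j$), projecting $FP'=F'P$ to each $\roots$-coset forces $m=m'$ and, after reindexing, $\mu_i\equiv\mu'_i\pmod{\roots}$. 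To upgrade this congruence to equality, I would use that $f_iP'$ has nonzero constant term (condition (2) gives $f_i(0)\neq 0$ and $P'$ has constant term $1$), so its support contains $0\in\roots$, while the matched piece $e^{\mu'_i-\mu_i}f'_iP$ has support inside $(\mu'_i-\mu_i)-\roots^+$. Both supports being equal and both being bounded above in the $\roots^+$-order forces $\mu'_i-\mu_i\in\roots^+\cap(-\roots^+)=\{0\}$, so $\mu_i=\mu'_i$.

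Once the weights are aligned, the identity reduces to $f_iP'=f'_iP$ in the polynomial ring $\mathcal{S}$ for each $i$. For each $\beta\in\positiveroots$, let $v_\beta$ denote the $(1-e^{-\beta})$-adic valuation on $\mathcal{S}$, which is available because $1-e^{-\beta}$ is a prime element of $\mathcal{S}$ and different positive roots give non-associated primes. Comparing valuations yields
\begin{equation}
v_\beta(f_i)+n'_\beta=v_\beta(f'_i)+n_\beta,
\end{equation}
with the convention that $n_\beta=0$ when $\beta\notin T_a$ and similarly for $n'_\beta$. If $n_\beta>n'_\beta$ for some $\beta$, then $v_\beta(f_i)\geq n_\beta-n'_\beta\geq 1$ for every $i$, i.e.\ $1-e^{-\beta}$ divides every $f_i$, contradicting condition (4) of Definition~\ref{def: reduced rational form} for the unprimed form; the symmetric argument rules out $n_\beta<n'_\beta$. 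Hence $n_\beta=n'_\beta$ for all $\beta$, so $T_a=T'_a$ and $P=P'$, and cancellation in the integral domain $\mathcal{S}$ gives $f_i=f'_i$.

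The main technical obstacle is justifying the $(1-e^{-\beta})$-adic valuation, i.e.\ showing that $1-e^{-\beta}$ is irreducible in $\mathbb{Z}[e^{-\alpha_1},\ldots,e^{-\alpha_N}]$ and that the elements $\{1-e^{-\beta}\}_{\beta\in\positiveroots}$ are pairwise non-associated. Both points reduce to the standard fact that in a reduced crystallographic root system, the coefficients of any root in the basis of simple roots are coprime; this rules out any cyclotomic-type factorization $1-X^d=\prod_k(1-\zeta^k X)$ of $1-e^{-\beta}$ and makes distinct $1-e^{-\beta}$ visibly inequivalent irreducibles. Once this input is in place, the remainder of the argument is a routine application of unique factorization combined with the coset support analysis above.
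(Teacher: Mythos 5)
Your proof is correct and follows the same route as the paper, whose entire proof is the one-line remark that uniqueness ``is clear from the definition and the fact that the polynomial ring $\mathcal{S}$ is a UFD.'' You have simply supplied the details that remark leaves implicit --- the separation of the numerator terms into distinct $\roots$-cosets, the pinning down of the $\mu_i$ via the nonzero constant terms, and the genuinely nontrivial input that each $1-e^{-\beta}$ is a prime of $\mathcal{S}$ (via primitivity of the coefficient vector of a root, e.g.\ because every root is $W$-conjugate to a simple root), with distinct $\beta$ giving non-associated primes --- so the argument is a complete and faithful elaboration of the paper's intended proof.
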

\begin{proof}
It is clear from the definition and the fact that the polynomial ring $\mathcal{S}$ is a UFD.
\end{proof}

Not all elements in $\Chi$ can be written in reduced rational form. Nevertheless for formal characters of modules in $\O$ we have the following result.

\begin{lemma}\label{lemma: reduce rational form of formal character}
Let $M\in \O$ be a nonzero module. Then the set $T_{\ch(M)}$ of  denominator roots exists and $\ch(M)$ can be written uniquely in reduced rational form. Moreover we have
\begin{equation}\label{eq: reduced form of formal character}
\ch(M)=\frac{\sum_{i=1}^me^{\mu_i}f_i}{\prod_{\beta\in T_{\ch(M)}}(1-e^{-\beta})}
\end{equation}
and Property 1, 2, 3, 4 in Definition \ref{def: reduced rational form} are satisfied with all $n_{\beta}=1$. In the sequel we will denote  $T_{\ch(M)}$ by $T_M$ and we will call  $T_M$ the set of denominator roots of $M$.
\end{lemma}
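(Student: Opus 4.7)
The plan is to take the unreduced rational expression for $\ch(M)$ in \eqref{eq: unreduced rational formal character},
\[
\ch(M) \;=\; \frac{N_0}{\prod_{\beta\in\positiveroots}(1-e^{-\beta})}, \qquad N_0 \;:=\; \sum_{i=1}^{m}\sum_{\lambda\in\mathfrak{h}_q^*}[M:V(\mu_i)]\,m_{\mu_i,\lambda}\,e^{\lambda},
\]
whose numerator is a \emph{finite} $\ZZ$-linear combination of exponentials, and to simplify it by two operations: grouping $N_0$ by cosets of the root lattice $\roots$ in $\mathfrak{h}_q^*$, and cancelling common factors inside the UFD $\mathcal{S}=\ZZ[e^{-\alpha_1},\ldots,e^{-\alpha_N}]$.

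Since $\mathrm{supp}(N_0)$ is finite, only finitely many cosets $C_1,\ldots,C_r$ of $\roots$ meet it. In each $C_j$ I pick $\mu_j \in C_j$ so that $\mu_j - \lambda \in \roots^+$ for every $\lambda\in\mathrm{supp}(N_0)\cap C_j$, choosing $\mu_j$ to be the unique $\leq$-maximum of $\mathrm{supp}(N_0)\cap C_j$ whenever such a maximum exists. This allows writing $N_0\vert_{C_j}=e^{\mu_j}h_j$ with $h_j\in\mathcal{S}$, and summing gives $N_0=\sum_{j=1}^r e^{\mu_j}h_j$. Because the $\mu_j$'s lie in pairwise distinct cosets of $\roots$, Property~1 of Definition~\ref{def: reduced rational form} is automatic; and whenever $\mu_j\in\mathrm{supp}(N_0)$, the constant term of $h_j$ equals the coefficient of $e^{\mu_j}$ in $N_0$ and is therefore nonzero, giving Property~2.

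Next I cancel common factors. The ring $\mathcal{S}$ is a polynomial ring in $N$ indeterminates, hence a UFD, and each $1-e^{-\beta}$ is irreducible in $\mathcal{S}$, with distinct positive roots giving pairwise non-associate irreducibles. For each $\beta\in\positiveroots$ I test whether $1-e^{-\beta}$ divides every $h_j$; if so, I cancel one copy from the numerator and the denominator and update each $h_j$ to $h_j/(1-e^{-\beta})$. Iterate until no further cancellation is possible, and let $T_M\subset\positiveroots$ be the surviving set. By construction, for every $\beta\in T_M$ at least one (updated) $h_j$ is not divisible by $1-e^{-\beta}$, giving Property~4. Since the starting exponent on each $1-e^{-\beta}$ is $1$ and we only remove factors, the surviving exponent equals $1$, so $n_\beta=1$, which is the sharper form of Property~3 claimed. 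The factor $1-e^{-\beta}$ has constant term $1$, so cancellation preserves nonzero constant terms of the $h_j$'s, and uniqueness of the resulting reduced form is Lemma~\ref{lemma: reduced rational form is unique}.

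The step I expect to be the main obstacle is Property~2 in the case when $\mathrm{supp}(N_0)\cap C_j$ has several $\leq$-incomparable maximal elements: one is then forced to pick $\mu_j$ strictly above every support point (for example the componentwise join in the simple-root basis), so the raw $h_j$ has vanishing constant term, and cancellation by factors of constant term $1$ cannot restore it. The delicate part is therefore to argue that this scenario does not occur for formal characters of modules in $\O$—presumably via a finer analysis using Lemma~\ref{lemma: multiplicities of Jordan-Holder} and the $\hat W$-orbit structure—or else to reorganise the decomposition so as to retain Property~1 while preserving Property~2.
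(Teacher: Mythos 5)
Your overall route is the one the paper takes (its own proof is a one-line citation of Corollary \ref{coro: formal character of general module}, Lemma \ref{lemma: reduced rational form is unique} and \eqref{eq: unreduced rational formal character}), and the parts of your argument dealing with Properties 1, 3 and 4 are correct: each $1-e^{-\beta}$ is irreducible in the UFD $\mathcal{S}$ (the exponent vector of a positive root in the simple roots is primitive), distinct positive roots give pairwise non-associate irreducibles, and since every factor in \eqref{eq: unreduced rational formal character} occurs to the first power, cancellation can only leave exponent $n_\beta=1$. These are also the only parts of the lemma used later: Corollary \ref{coro: finite dimension and reduced rational form} and Lemmas \ref{lemma: V_mu times V_mu}, \ref{lemma: V_mu times M_lambda} need only the existence and uniqueness of $T_M$ and the absence of squared factors.

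The difficulty you isolate for Property 2, however, is real and is not removable by a finer analysis of $\hat{W}$-orbits: the statement fails as literally formulated. For $\mathfrak{g}$ of type $A_2$ take $M=V(3\varpi_1)\oplus V(3\varpi_2)$. Both summands are finite dimensional, the weights $3\varpi_1=2\alpha_1+\alpha_2$ and $3\varpi_2=\alpha_1+2\alpha_2$ lie in the same coset of $\roots$ (in $\roots$ itself), and their difference $\alpha_1-\alpha_2$ lies in neither $\roots^+$ nor $-\roots^+$, so they are two incomparable maximal elements of $\mathrm{supp}\,\ch(M)$. Here $T_M=\emptyset$, so the putative reduced form is a bare numerator; Property 1 allows only one term $e^{\mu_1}f_1$ for this coset, and Property 2 then forces $\mu_1\in\mathrm{supp}\,\ch(M)$ together with $\mathrm{supp}\,\ch(M)\subset\mu_1-\roots^+$, i.e.\ $\mu_1$ would be the unique $\leq$-maximum of the support, which does not exist. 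The correct repair is to weaken Definition \ref{def: reduced rational form}: either drop the nonzero-constant-term condition, or allow several $\mu_i$ in the same coset (equivalently, let the numerator be any element of $\bigoplus_j e^{\nu_j}\mathcal{S}$ over coset representatives $\nu_j$) while keeping the coprimality condition 4. With that weakening your cancellation argument goes through verbatim and still yields uniqueness of $T_M$ and $n_\beta=1$; you should state this repair explicitly rather than leave the gap open.
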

\begin{proof}
It is a direct consequence of Corollary \ref{coro: formal character of general module}, Lemma \ref{lemma: reduced rational form is unique}, and \eqref{eq: unreduced rational formal character}.
\end{proof}

\begin{example}
By Lemma \ref{lemma: reduce rational form of formal character}, for any $\alpha\in \trueroots^+$ the formal power series
$$
\frac{1}{(1-e^{-\alpha})^2}
$$
cannot be the formal character of any module in $\O$ although $\frac{1}{(1-e^{-\alpha})^2}\in \Chi$. Intuitively this is because the multiplicity of $e^{-n\alpha}$ grows too fast as $n$ grows and here we have a precise criterion of this fact.
\end{example}

 \begin{cor}\label{coro: finite dimension and reduced rational form}
A nonzero module $M\in \O$ is finite dimensional if and only if its reduced rational form has denominator $=1$, i.e. $T_M=\emptyset$.
\end{cor}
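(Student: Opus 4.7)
The plan is to prove both directions of the equivalence, with the backward direction being nearly immediate and the forward direction requiring a short contradiction argument that exploits both conditions (1) and (4) of Definition~\ref{def: reduced rational form}.

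For the backward direction, I would argue as follows. Assume $T_M = \emptyset$. Then by Lemma~\ref{lemma: reduce rational form of formal character}, the reduced rational form collapses to $\ch(M) = \sum_{i=1}^m e^{\mu_i} f_i$ with each $f_i \in \mathcal{S}$ a polynomial in $e^{-\alpha_1}, \dots, e^{-\alpha_N}$. This is already a finite sum of exponentials $e^\lambda$, so only finitely many weight spaces $M_\lambda$ are nonzero. Combined with $\dim(M_\lambda) < \infty$ from part~(2) of Proposition~\ref{prop: properties of O}, we obtain $\dim M < \infty$.

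For the forward direction, suppose $M$ is finite-dimensional, so $\ch(M)$ has finite support in $\mathfrak{h}^*_q$, and argue by contradiction. Assume $T_M \neq \emptyset$ and pick any $\beta_0 \in T_M$. By condition (4) of Definition~\ref{def: reduced rational form} there exists an index $i_0$ such that $1 - e^{-\beta_0}$ does not divide $f_{i_0}$ in $\mathcal{S}$. The strategy is to restrict the reduced rational expansion of $\ch(M)$ to the coset $\mu_{i_0} + \roots \subset \mathfrak{h}^*_q$ and read off a forced divisibility. The key observation is that $\prod_{\beta \in T_M}(1 - e^{-\beta}) \in \mathcal{S}$ is supported entirely in $\roots$, and hence so is its formal geometric-series inverse $\prod_{\beta \in T_M} \sum_{n \geq 0} e^{-n\beta}$. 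Consequently the summand $e^{\mu_i} f_i / \prod_{\beta \in T_M}(1 - e^{-\beta})$ in the expansion of $\ch(M)$ has support contained in the single coset $\mu_i + \roots$. By condition (1) of Definition~\ref{def: reduced rational form}, the $\mu_i$ lie in pairwise distinct cosets, so restricting $\ch(M)$ to the coset $\mu_{i_0} + \roots$ isolates precisely the single term $e^{\mu_{i_0}} f_{i_0} / \prod_{\beta \in T_M}(1 - e^{-\beta})$. Since $\ch(M)$ has finite support, so does this restriction, which forces the formal series $f_{i_0} \cdot \prod_{\beta \in T_M} \sum_{n \geq 0} e^{-n\beta}$ to be an honest polynomial in $\mathcal{S}$. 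This in turn forces $\prod_{\beta \in T_M}(1 - e^{-\beta})$ to divide $f_{i_0}$ in $\mathcal{S}$; in particular $1 - e^{-\beta_0}$ divides $f_{i_0}$, contradicting the choice of $i_0$. Hence $T_M = \emptyset$.

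The main obstacle is the last implication: if the formal geometric expansion of $f_{i_0}/\prod_{\beta \in T_M}(1 - e^{-\beta})$ has finite support then $\prod_{\beta \in T_M}(1 - e^{-\beta})$ must divide $f_{i_0}$ in $\mathcal{S}$. This is an elementary fact about the polynomial ring $\mathbb{Z}[e^{-\alpha_1}, \dots, e^{-\alpha_N}]$ (if $f/g$ expanded around the origin is a polynomial then $g \mid f$), but it must be stated carefully. Everything else in the proof is straightforward bookkeeping on cosets of $\roots$ inside $\mathfrak{h}^*_q$.
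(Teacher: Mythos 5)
Your proof is correct, but it takes a more self-contained route than the paper. The paper disposes of the corollary in one line by appealing to Lemma~\ref{lemma: reduced rational form is unique}: a finite-dimensional $M$ has $\ch(M)$ equal to a finite sum of exponentials, which is itself a reduced rational form with trivial denominator, so uniqueness forces $T_M=\emptyset$; conversely a trivial denominator gives a finitely supported character, hence (by finite-dimensionality of weight spaces and the weight-space decomposition) a finite-dimensional module. Your backward direction is exactly this. Your forward direction, instead of citing uniqueness, re-derives the relevant piece of it: you separate the summands of \eqref{eq: reduced form of formal character} along the pairwise-distinct cosets $\mu_i+\roots$ guaranteed by condition~(1), observe that finite support of $\ch(M)$ forces $f_{i_0}\cdot\prod_{\beta\in T_M}\sum_{n\geq 0}e^{-n\beta}$ to be a polynomial in $\mathcal{S}$, and conclude that $\prod_{\beta\in T_M}(1-e^{-\beta})$ divides $f_{i_0}$, contradicting the coprimality condition~(4). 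This is sound: the coset-isolation step is valid because each summand is supported in $\mu_i-\roots^+$, and the divisibility step is the elementary fact you flag (a quotient whose geometric expansion has finite support is an honest polynomial factorization in $\mathcal{S}$). What your version buys is transparency --- it makes explicit the mechanism that the paper's one-line proof (and indeed its one-line proof of the uniqueness lemma itself) leaves implicit, at the cost of a slightly longer argument. Either way the corollary stands.
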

\begin{proof}
It is implied by the uniqueness of reduced rational form.
\end{proof}

\begin{example}
For a Verma module $M(\mu)$, the reduced rational form of its formal character is 
$$
\ch(M(\mu))=\frac{e^{\mu}}{\prod_{\beta\in \positiveroots}(1-e^{-\beta})}
$$
hence $T_{M(\mu)}=\positiveroots$. 
\end{example}

\begin{example}
For a simple highest weight module $V(\mu)$ we have the reduced rational form
$$
\ch(V(\mu))=\frac{e^{\mu}f_{V(\mu)}}{\prod_{\beta\in T_{V(\mu)}}(1-e^{-\beta})}
$$
for some $f_{V(\mu)}\in \mathcal{S}$. In particular if $\mu\in \weights_q^+$ then $T_{V(\mu)}=\emptyset$ and $\ch(V(\mu))$ is given explicitly by the Weyl character formula. Actually \cite[Proposition 4.4]{voigt2017complex} only covers the  $\mu\in \weights^+$ case but the general case can be easily obtained by \cite[Lemma 2.41]{voigt2017complex}.
\end{example}

\begin{rem}
For any $M\in \O$ it is clear that the denominator roots $T_{M}\subset \bigcup T_{V(\mu)}$ where the union is for all $V(\mu)$ such that $[M:V(\mu)]\neq 0$. The author does not know whether we have $T_{M}= \bigcup T_{V(\mu)}$  for all $V(\mu)$ such that $[M:V(\mu)]\neq 0$. Nevertheless we do not need this result in this paper.
\end{rem}

\section{Tensor closed objects in category $\O$}
The category  $U_q(\mathfrak{g})$-Mod has a tensor product since $U_q(\mathfrak{g})$ is a Hopf algebra. Moreover we have the following lemma.
\begin{lemma}
 $U_q(\mathfrak{g})$-Mod is a braided category. In particular for any left $U_q(\mathfrak{g})$-modules $V$ and $W$ we have a $U_q(\mathfrak{g})$-module isomorphism $V\otimes W\cong W\otimes V$.
\end{lemma}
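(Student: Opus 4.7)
My plan is to invoke the quasi-triangular Hopf algebra structure of $U_q(\mathfrak{g})$ and construct the braiding from the universal R-matrix. Concretely, I would first recall that for $q\in \mathbb{C}^\times$ not a root of unity, $U_q(\mathfrak{g})$ admits a universal R-matrix $R$, living in a suitable completion of $U_q(\mathfrak{b}_+) \otimes U_q(\mathfrak{b}_-)$, of the form $R = \bar{R}\cdot K$, where $K$ is the ``Cartan'' factor that acts on a pair of weight vectors $v_\lambda \otimes w_\mu$ by the scalar $q^{(\lambda,\mu)}$, and $\bar{R} = \sum_{\beta \in \roots^+} \bar{R}_\beta$ with $\bar{R}_\beta \in U_q(\mathfrak{n}_+)_\beta \otimes U_q(\mathfrak{n}_-)_{-\beta}$. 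This existence is a standard result; I would just cite it from Jantzen's \emph{Lectures on Quantum Groups} or Lusztig's book on quantum groups, or from the reference \cite{voigt2017complex} that the paper already uses.

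Next I would define the braiding $c_{V,W}\colon V\otimes W \to W\otimes V$ by the formula $c_{V,W}(v\otimes w) = \tau\bigl(R\cdot (v\otimes w)\bigr)$, where $\tau$ is the flip. The key technical point to check is that the action of $R$ on $V\otimes W$ actually makes sense, since $\bar{R}$ is an infinite sum; the infinite sum truncates to a finite one on any pair of weight vectors provided that the action of $U_q(\mathfrak{n}_+)$ on $V$ is locally nilpotent, which is exactly condition (c) of Definition \ref{def: categoryO}. For modules in $\O$ this gives a well-defined operator; for a fully general statement one can restrict attention to the subcategory of weight modules on which $U_q(\mathfrak{n}_+)$ acts locally nilpotently (which contains category $\O$ and is the setting the paper actually needs).

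Once $c_{V,W}$ is well-defined, the fact that it is $U_q(\mathfrak{g})$-linear is the intertwining identity $R\,\hat{\Delta}(x) = \hat{\Delta}^{\op}(x)\,R$ for all $x\in U_q(\mathfrak{g})$, which is part of the definition of a universal R-matrix. Naturality in $V$ and $W$ is immediate. The two hexagon axioms follow from the coproduct identities $(\hat{\Delta}\otimes \id)R = R_{13}R_{23}$ and $(\id \otimes \hat{\Delta})R = R_{13}R_{12}$. Invertibility of $c_{V,W}$, hence the iso $V\otimes W \cong W\otimes V$, follows from the invertibility of $R$ in the completion.

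The main obstacle is entirely the well-definedness of the action of $R$: the existence of $R$ and all its algebraic identities are classical, but $R$ lives in a completion, so one must be careful to work within a category on which the relevant infinite sums terminate. My proof would therefore open with a precise statement of the category on which the braiding is defined (weight modules with locally nilpotent $\mathfrak{n}_+$-action), verify convergence there via the weight-grading of $\bar{R}$, and then the remaining steps reduce to quoting the standard R-matrix identities.
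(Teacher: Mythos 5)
Your proposal takes essentially the same route as the paper, whose entire proof is to cite the quasitriangularity of $U_q(\mathfrak{g})$ in the sense of \cite[Theorem 2.108]{voigt2017complex}; you simply unpack what that citation contains (the R-matrix, the intertwining identity, the hexagon axioms). Your extra care about the completion issue --- that $R$ only acts termwise-finitely on weight modules with locally nilpotent $U_q(\mathfrak{n}_+)$-action, so the braiding is really only defined on a subcategory containing $\O$ rather than on all of $U_q(\mathfrak{g})$-Mod as the lemma literally states --- is a genuine precision the paper's one-line proof glosses over, and it is exactly the right caveat.
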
 
\begin{proof}
It is clear since  $U_q(\mathfrak{g})$ is quasitriangular in the sense of \cite[Theorem 2.108]{voigt2017complex}. 
\end{proof}

However category $\O$ is not closed under tensor product.

\begin{defn}\label{def: tensor-closed objects}
We call a module $M\in \O$ \emph{tensor-closed} if for any $N\in\O$, the tensor product $M\otimes N\cong N\otimes M$ is still in $\O$.
\end{defn}

The following result is well-known.

\begin{lemma}\label{lemma: finite dimensional module is tensor-closed}
Any finite dimensional module $V\in \O$ is tensor-closed.
\end{lemma}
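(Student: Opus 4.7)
The plan is to verify directly the three defining axioms (a)--(c) of category $\O$ for $V \otimes N$, exploiting the finite dimensionality of $V$ together with the Hopf algebra structure of $U_q(\mathfrak{g})$.

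Axiom (b), that $V \otimes N$ is a weight module, is immediate from $\hat\Delta(K_\lambda) = K_\lambda \otimes K_\lambda$: a pure tensor of weight vectors is a weight vector whose weight is the sum, giving the decomposition $(V \otimes N)_\nu = \bigoplus_{\lambda + \mu = \nu} V_\lambda \otimes N_\mu$. For axiom (c), the formula $\hat\Delta(E_i) = E_i \otimes K_i + 1 \otimes E_i$ shows $\hat\Delta(U_q(\mathfrak{n}_+)) \subseteq U_q(\mathfrak{b}_+) \otimes U_q(\mathfrak{n}_+)$, so for any $v \otimes n$ we get
\[
U_q(\mathfrak{n}_+) \cdot (v \otimes n) \;\subseteq\; \bigl(U_q(\mathfrak{b}_+)\cdot v\bigr) \otimes \bigl(U_q(\mathfrak{n}_+)\cdot n\bigr),
\]
and both factors on the right are finite dimensional, the first because $V$ is finite dimensional and the second because $N \in \O$.

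The main obstacle is axiom (a), finite generation. Fix a basis $\{v_1,\dots,v_d\}$ of $V$ and generators $\{n_1,\dots,n_k\}$ of $N$; the plan is to show $\{v_i \otimes n_j\}$ generates $V \otimes N$ over $U_q(\mathfrak{g})$, which requires moving an action $x \cdot$ across the tensor sign. I would establish the Hopf-algebra identity
\[
v \otimes (x\cdot n) \;=\; \sum_{(x)} x_{(2)} \cdot \bigl(\hat S^{-1}(x_{(1)})\, v \,\otimes\, n\bigr),
\]
which comes from $\sum_{(x)} x_{(2)} \hat S^{-1}(x_{(1)}) \otimes x_{(3)} = 1 \otimes x$ in $U_q(\mathfrak{g}) \otimes U_q(\mathfrak{g})$; the latter is verified by coassociativity together with $\sum x_{(2)} \hat S^{-1}(x_{(1)}) = \hat\epsilon(x)$ (obtained by applying $\hat S^{-1}$ to the standard antipode axiom). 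Granted this identity, each $v \otimes (x n_j)$ lies in $U_q(\mathfrak{g}) \cdot (V \otimes \{n_j\})$; expanding $v$ in the basis $\{v_i\}$ places it in $U_q(\mathfrak{g}) \cdot \{v_i \otimes n_j\}$. Any $w \in V \otimes N$ is a finite sum $\sum v_i \otimes m_i$ with each $m_i$ a sum of terms $x \cdot n_j$, so finite generation follows.

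The hard step is thus (a): it is the only one that uses more than the comultiplication on generators, invoking the antipode and, crucially, its invertibility on $U_q(\mathfrak{g})$. Axioms (b) and (c) are essentially forced by the explicit formulas for $\hat\Delta(K_\lambda)$ and $\hat\Delta(E_i)$, and the finite dimensionality of $V$ is what rescues (c) from needing any further input.
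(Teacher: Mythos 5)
Your proof is a direct verification of the three axioms of Definition \ref{def: categoryO}, which is exactly what the paper's one-line citation to \cite{humphreys2008representations} amounts to, so the overall strategy matches. Parts (a) and (b) are correct; in particular, your derivation of the tensor identity from $\sum_{(x)} x_{(2)}\hat S^{-1}(x_{(1)})\otimes x_{(3)} = 1\otimes x$ (which does require invertibility of the antipode, true for $U_q(\mathfrak{g})$) is the right way to reduce finite generation of $V\otimes N$ to finite generation of $N$ and finite dimensionality of $V$.

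There is, however, a slip in your verification of axiom (c). From $\hat\Delta(E_i) = E_i\otimes K_i + 1\otimes E_i$, the \emph{left} tensor legs are $E_i$ and $1$, and the \emph{right} tensor legs are $K_i$ and $E_i$; since $\hat\Delta$ is an algebra map this gives $\hat\Delta(U_q(\mathfrak{n}_+))\subseteq U_q(\mathfrak{n}_+)\otimes U_q(\mathfrak{b}_+)$, not $U_q(\mathfrak{b}_+)\otimes U_q(\mathfrak{n}_+)$ as you wrote. The correct containment is therefore
\[
U_q(\mathfrak{n}_+)\cdot(v\otimes n) \;\subseteq\; \bigl(U_q(\mathfrak{n}_+)\cdot v\bigr)\otimes\bigl(U_q(\mathfrak{b}_+)\cdot n\bigr).
\]
The conclusion still holds, but the justification shifts: the first factor is finite dimensional trivially (it lies in $V$), while for the second you now need the observation that for a weight vector $n$ one has $U_q(\mathfrak{b}_+)\cdot n = U_q(\mathfrak{n}_+)\,U_q(\mathfrak{h})\cdot n = U_q(\mathfrak{n}_+)\cdot n$ because the $K_\mu$ act by scalars, and $U_q(\mathfrak{n}_+)\cdot n$ is finite dimensional since $N\in\O$. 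So the fix is immediate, but it does use the semisimplicity of the $U_q(\mathfrak{h})$-action on $N$, which your stated (but false) inclusion would have let you avoid.
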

\begin{proof}
The proof is the same as that of \cite[Theorem 1.1 (d)]{humphreys2008representations}.
\end{proof}

In this section we prove the following result.

\begin{theorem}\label{thm: tensor-closed is finite dimensional}
A module $V\in \O$ is tensor-closed if and only if it is finite dimensional.
\end{theorem}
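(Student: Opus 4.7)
The nontrivial direction is that every infinite dimensional $V \in \O$ fails to be tensor-closed. The plan is to take $N = M(\mu)$ for any Verma module $M(\mu)$ as the test object and show that $V \otimes M(\mu) \notin \O$ by comparing reduced rational forms of formal characters. Since weight multiplicities of the tensor product are computed by convolution, $\ch(V \otimes M(\mu)) = \ch(V) \cdot \ch(M(\mu))$ in $\Chi$, and the support of this character lies in a finite union of the form $\bigcup_i (\mu + \mu_i^V - \roots^+)$, where $\mu_i^V$ are the coset representatives appearing in the reduced form of $\ch(V)$.

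Assume $V$ is infinite dimensional. Corollary \ref{coro: finite dimension and reduced rational form} gives a root $\beta \in T_V \subset \positiveroots$. Substituting the reduced form of $\ch(V)$ from Lemma \ref{lemma: reduce rational form of formal character} and $\ch(M(\mu)) = e^\mu / \prod_{\gamma \in \positiveroots}(1-e^{-\gamma})$ yields
\[
\ch(V \otimes M(\mu)) = \frac{\sum_i e^{\mu + \mu_i^V} f_i^V}{\prod_{\beta' \in T_V}(1 - e^{-\beta'}) \cdot \prod_{\gamma \in \positiveroots}(1 - e^{-\gamma})},
\]
in which the factor $(1 - e^{-\beta})$ appears with multiplicity two in the denominator. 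Suppose for contradiction $V \otimes M(\mu) \in \O$; then Lemma \ref{lemma: reduce rational form of formal character} yields a reduced rational form of $\ch(V \otimes M(\mu))$ in which each denominator factor appears only to the first power. A direct weight-space count shows $\dim (V \otimes M(\mu))_{\mu + \mu_i^V} = \dim V_{\mu_i^V} = f_i^V(0) \neq 0$ and that $\mu + \mu_i^V$ is the top element, with respect to the order of Definition \ref{def: partial order on weights}, in its $\roots$-coset in the support. By the uniqueness in Lemma \ref{lemma: reduced rational form is unique}, the cosets $\mu + \mu_i^V + \roots$ appear in both expressions and the two rational forms agree coset by coset.

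Equating them within each coset and clearing denominators in the polynomial ring $\mathcal{S}$, which is a UFD, gives
\[
f_i^V \cdot \prod_{\delta \in T_{V \otimes M(\mu)}}(1 - e^{-\delta}) \;=\; g_i \cdot \prod_{\beta' \in T_V}(1 - e^{-\beta'}) \prod_{\gamma \in \positiveroots}(1 - e^{-\gamma})
\]
for each $i$, where $g_i \in \mathcal{S}$ is the reduced numerator of the $i$-th coset in $\ch(V \otimes M(\mu))$. Because $\beta$ is primitive in the root lattice of a reduced root system, $1 - e^{-\beta}$ is irreducible in $\mathcal{S}$ and coprime to $1 - e^{-\gamma}$ for every $\gamma \in \positiveroots \setminus \{\beta\}$. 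Taking $(1 - e^{-\beta})$-adic valuations, the right side has valuation at least $2$, while the left has valuation at most $v_\beta(f_i^V) + 1$. Hence $(1 - e^{-\beta}) \mid f_i^V$ for every $i$, which permits cancellation of a common factor $(1 - e^{-\beta})$ from the numerator and denominator of the reduced form of $\ch(V)$, contradicting condition (4) of Definition \ref{def: reduced rational form}.

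The main obstacle is the combination of coset bookkeeping and the UFD/irreducibility argument in $\mathcal{S}$: one has to verify that the reduced rational form of $\ch(V \otimes M(\mu))$ really uses the same $\roots$-cosets $\mu + \mu_i^V + \roots$ as the naive product, and that $(1 - e^{-\beta})$ for distinct positive roots are pairwise coprime irreducibles in $\mathcal{S}$. Both of these are standard properties of reduced root systems and of the polynomial ring $\mathcal{S}$, after which the valuation step is essentially immediate.
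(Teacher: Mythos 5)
Your proof is correct and rests on the same core mechanism as the paper's (reduced rational forms and the appearance of $(1-e^{-\beta})^2$ in the denominator, which Lemma~\ref{lemma: reduce rational form of formal character} forbids for modules in $\O$), but you take a different reduction route. The paper first invokes the Jordan--H\"older decomposition (Proposition~\ref{prop: Jordan-Holder}) to extract an infinite-dimensional simple subquotient $V(\mu)$ of $M$, proves $V(\mu)\otimes V(\mu)\notin\O$ (Lemma~\ref{lemma: V_mu times V_mu}), and then uses exactness of $\otimes$ together with closure of $\O$ under subquotients to conclude $M\otimes V(\mu)\notin\O$. You instead test $V$ directly against a Verma module $M(\mu)$ and run the denominator-squared argument on $\ch(V)\ch(M(\mu))$ itself. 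This buys you a single universal family of test objects (any Verma module works, independent of $V$) and avoids the subquotient step, at the cost of a multi-coset comparison: because $V$ is not a highest weight module, $\ch(V)$ has several $\roots$-coset components $e^{\mu_i^V}f_i^V$, and you must match them against the coset components of the putative reduced form of $\ch(V\otimes M(\mu))$ before applying the $(1-e^{-\beta})$-valuation. You handle that correctly by identifying the coset representatives as the unique $\leq$-maximal weights in each support coset. The paper sidesteps this entirely because a simple highest weight module has a single-coset reduced form. One point in your favor: you make explicit the irreducibility of $1-e^{-\beta}$ in $\mathcal{S}$ (equivalently, primitivity of $\beta$ in $\roots$, which does hold for reduced root systems), a fact the paper uses implicitly in the proof of Lemma~\ref{lemma: V_mu times V_mu} without comment. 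Your argument is essentially a direct proof of a strengthened Lemma~\ref{lemma: V_mu times M_lambda} with $V(\mu)$ replaced by an arbitrary infinite-dimensional $V\in\O$.
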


To give the proof more rigorously we introduce the following auxiliary category.

\begin{defn} \label{def: categoryO tilde}
A left module $ M $ over $ U_q(\mathfrak{g}) $ is said to belong to the category $ \widetilde{\O }$ if 
\begin{enumerate}[a)]
\item $ M $ is a weight module and all  weight spaces of $M$  are finite dimensional.
 
\item There exists finitely many weights $\nu_1,\ldots, \nu_l\in \mathfrak{h}^*_q$ such that supp$M\subset \bigcup_{i=1}^l(\nu_i-\roots^+)$, where supp$M=\{\lambda\in \mathfrak{h}^*_q\mid M_{\lambda}\neq 0\}$.
\end{enumerate}
Morphisms in category $\widetilde{\O} $ are all $ U_q(\mathfrak{g}) $-linear maps. 
\end{defn} 

It is clear that $\O$ is a full subcategory of $\widetilde{\O }$. $\widetilde{\O }$ is closed under tensor product and modules in $\widetilde{\O }$ have formal characters in the ring $\Chi$ in Definition \ref{def: convolution ring}. Moreover for $M$, $N\in \widetilde{\O }$ we have
\begin{equation}\ch(M\otimes N)=\ch(M)\ch(N).
\end{equation}

\begin{lemma}\label{lemma: V_mu times V_mu}
For two simple highest weight modules $V(\mu)$ and $V(\lambda)$, if $V(\mu)\otimes V(\lambda) \in \O$, then $$T_{V(\mu)}\cap T_{V(\lambda)}=\emptyset.$$ In particular for any infinite dimensional simple highest weight module $V(\mu)$ we have $V(\mu)\otimes V(\mu)\notin \O$.
\end{lemma}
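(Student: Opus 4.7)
The plan is to exploit the multiplicativity $\ch(V(\mu)\otimes V(\lambda))=\ch(V(\mu))\ch(V(\lambda))$ together with the rigid structure of reduced rational forms imposed by Lemma \ref{lemma: reduce rational form of formal character}. Under the hypothesis that $V(\mu)\otimes V(\lambda)\in\O$, its reduced rational form has every denominator factor $(1-e^{-\gamma})$ appearing with exponent exactly $1$. On the other hand, multiplying the reduced forms of $\ch(V(\mu))$ and $\ch(V(\lambda))$ directly yields
\begin{equation*}
\ch(V(\mu))\ch(V(\lambda))=\frac{e^{\mu+\lambda}\,f_{V(\mu)}f_{V(\lambda)}}{\prod_{\gamma\in T_{V(\mu)}}(1-e^{-\gamma})\cdot\prod_{\gamma\in T_{V(\lambda)}}(1-e^{-\gamma})},
\end{equation*}
and for any $\beta\in T_{V(\mu)}\cap T_{V(\lambda)}$ the factor $(1-e^{-\beta})$ occurs with multiplicity two on the right. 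By uniqueness of the reduced form (Lemma \ref{lemma: reduced rational form is unique}), one copy of $(1-e^{-\beta})$ must cancel with the numerator $f_{V(\mu)}f_{V(\lambda)}$, since the exponential $e^{\mu+\lambda}$ is a unit of $\Chi$.

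The next step is to upgrade divisibility of the product to divisibility of a single factor. Identify $\mathcal{S}$ with the polynomial ring $\mathbb{Z}[e^{-\alpha_1},\ldots,e^{-\alpha_N}]$, so it is a UFD, and verify that $1-e^{-\beta}$ is irreducible (hence prime). Writing $\beta=\sum_{i=1}^{N}n_i\alpha_i$, this reduces to the irreducibility of $1-X_1^{n_1}\cdots X_N^{n_N}$ in $\mathbb{Z}[X_1,\ldots,X_N]$, which holds because the simple-root coefficients $n_i$ of any positive root satisfy $\gcd(n_1,\ldots,n_N)=1$, a standard fact for reduced root systems. Primality of $1-e^{-\beta}$ then forces it to divide either $f_{V(\mu)}$ or $f_{V(\lambda)}$ inside $\mathcal{S}$.

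Either possibility contradicts Property 4 of Definition \ref{def: reduced rational form}, applied to $\ch(V(\mu))$ and $\ch(V(\lambda))$ respectively: both reduced forms have a single numerator term, and the coprime clause says that $1-e^{-\beta}$ is not a factor of $f_{V(\mu)}$ whenever $\beta\in T_{V(\mu)}$, and similarly for $\lambda$. This contradiction establishes $T_{V(\mu)}\cap T_{V(\lambda)}=\emptyset$. For the ``in particular'' clause, take $\lambda=\mu$: if $V(\mu)$ is infinite dimensional then $T_{V(\mu)}\neq\emptyset$ by Corollary \ref{coro: finite dimension and reduced rational form}, so $T_{V(\mu)}\cap T_{V(\mu)}=T_{V(\mu)}$ is non-empty, and the contrapositive of the first part forces $V(\mu)\otimes V(\mu)\notin\O$.

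The main obstacle is the irreducibility of $1-e^{-\beta}$ in $\mathcal{S}$; once this is in hand, the rest is bookkeeping flowing from uniqueness of reduced rational forms and the coprime clause of Definition \ref{def: reduced rational form}. The content of the lemma really lies in (i) the UFD structure of $\mathcal{S}$ and (ii) the root-system fact that positive roots have primitive simple-root coefficients.
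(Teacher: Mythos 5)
Your argument is correct and mirrors the paper's proof: multiply the reduced rational forms of $\ch(V(\mu))$ and $\ch(V(\lambda))$, observe the squared factor $(1-e^{-\beta})^2$ in the denominator whenever $\beta\in T_{V(\mu)}\cap T_{V(\lambda)}$, and invoke Lemma \ref{lemma: reduce rational form of formal character} to rule out such a square for a module in $\O$. You actually go one step further than the paper by explicitly establishing that $1-e^{-\beta}$ is prime in $\mathcal{S}$ (via the UFD structure together with primitivity of the simple-root coefficients of a positive root), which is precisely the fact the paper uses implicitly when it passes from non-divisibility of $f_{V(\mu)}$ and of $f_{V(\lambda)}$ by $1-e^{-\beta}$ to non-divisibility of their product.
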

\begin{proof}
For each simple highest weight module $V(\mu)$ we have the reduced rational form
$$
\ch(V(\mu))=\frac{e^{\mu}f_{V(\mu)}}{\prod_{\beta\in T_{V(\mu)}}(1-e^{-\beta})}
$$
where $f_{V(\mu)}$ is in the polynomial ring $\mathcal{S}$ such that $1-e^{-\beta}$ is not a factor of $f_{V(\mu)}$ for each $\beta\in T_{V(\mu)}$. Therefore
$$
\ch(V(\mu)\otimes V(\lambda))=\ch(V(\mu))\ch( V(\lambda))=\frac{e^{\mu+\lambda}f_{V(\mu)}f_{V(\lambda)}}{\prod_{\beta\in T_{V(\mu)}}(1-e^{-\beta})\prod_{\gamma\in T_{V(\lambda)}}(1-e^{-\gamma})}.
$$
Assume $ T_{V(\mu)}\cap T_{V(\lambda)}\neq \emptyset$ and let $\beta\in T_{V(\mu)}\cap T_{V(\lambda)}$, then $(1-e^{-\beta})^2$ appears in the denominator and $1-e^{-\beta}$ is not a factor of $f_{V(\mu)}$ nor $f_{V(\lambda)}$.  Therefore $(1-e^{-\beta})^2$ appears in the denominator of the reduced rational form of $\ch(V(\mu)\otimes V(\lambda))$. On the other hand by Lemma \ref{lemma: reduce rational form of formal character}, if $V(\mu)\otimes V(\lambda)$ is in $\O$ then the reduced rational form of $\ch(V(\mu)\otimes V(\lambda))$ cannot have squares in the denominator.  Hence  $V(\mu)\otimes V(\lambda)\in \O$  implies $T_{V(\mu)}\cap T_{V(\lambda)}=\emptyset$.

For  infinite dimensional $V(\mu)$, we know $T_{V(\mu)}\neq \emptyset$ by Corollary \ref{coro: finite dimension and reduced rational form}, so $V(\mu)\otimes V(\mu)\notin \O$.
\end{proof}

\begin{lemma}\label{lemma: V_mu times M_lambda}
For any infinite dimensional simple highest weight module $V(\mu)$  and any Verma module $M(\lambda)$ we have $V(\mu)\otimes M(\lambda)\cong M(\lambda)\otimes V(\mu)\notin \O$.
\end{lemma}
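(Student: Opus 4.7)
The plan is to follow the template of the proof of Lemma \ref{lemma: V_mu times V_mu}, exploiting the fact that a Verma module has the ``maximal possible'' denominator root set. Concretely, the example following Lemma \ref{lemma: reduce rational form of formal character} gives $T_{M(\lambda)} = \positiveroots$, while Corollary \ref{coro: finite dimension and reduced rational form} together with $V(\mu)$ being infinite dimensional gives $T_{V(\mu)} \neq \emptyset$. Since $T_{V(\mu)} \subseteq \positiveroots$ by inspection of \eqref{eq: unreduced rational formal character}, the intersection $T_{V(\mu)} \cap T_{M(\lambda)} = T_{V(\mu)}$ is nonempty, so I can fix some $\beta_0$ lying in both denominator root sets.

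To turn this into an obstruction, I would multiply the reduced rational forms of $\ch(V(\mu))$ and $\ch(M(\lambda))$, giving
\begin{equation*}
\ch(V(\mu) \otimes M(\lambda)) = \frac{e^{\mu+\lambda} f_{V(\mu)}}{\prod_{\gamma \in T_{V(\mu)}}(1-e^{-\gamma}) \prod_{\beta \in \positiveroots}(1-e^{-\beta})}.
\end{equation*}
For the chosen $\beta_0 \in T_{V(\mu)}$, the factor $1-e^{-\beta_0}$ appears with multiplicity $2$ in the denominator and, by clause (4) of Definition \ref{def: reduced rational form} applied to $\ch(V(\mu))$, it does not divide $f_{V(\mu)}$ in the UFD $\mathcal{S}$. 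Reduction to reduced rational form can only cancel common $\mathcal{S}$-factors between numerator and denominator, and no such cancellation touches either copy of $1-e^{-\beta_0}$; hence the reduced rational form of $\ch(V(\mu) \otimes M(\lambda))$ still carries $(1-e^{-\beta_0})$ in its denominator with multiplicity at least $2$.

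If $V(\mu) \otimes M(\lambda)$ were in $\O$, Lemma \ref{lemma: reduce rational form of formal character} would force every denominator exponent to be $1$, a contradiction. The isomorphism $V(\mu) \otimes M(\lambda) \cong M(\lambda) \otimes V(\mu)$ follows from the braiding. I do not anticipate a real obstacle: the one point worth explicit verification is that the numerator $e^{\mu+\lambda} f_{V(\mu)}$ is already in the shape required by Definition \ref{def: reduced rational form} (only a single coset of $\mathfrak{h}_q^*/\roots$ appears, since every weight of the tensor product lies in $\mu+\lambda - \roots^+$), so there is no mixing of numerator terms indexed by distinct $\mu_i$ that could conspire to cancel $(1-e^{-\beta_0})^2$.
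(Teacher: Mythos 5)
Your proof is correct and follows exactly the template the paper invokes: the paper's own proof simply says "similar to the proof of Lemma \ref{lemma: V_mu times V_mu}," and you fill in precisely the intended details (that $T_{M(\lambda)} = \positiveroots$ is the full set of positive roots, so $T_{V(\mu)}\cap T_{M(\lambda)} = T_{V(\mu)} \neq \emptyset$ automatically, and then the $(1-e^{-\beta_0})^2$ obstruction carries through unchanged). Your side remark about all weights of the tensor product lying in a single $\roots$-coset is a sound sanity check but not a genuine departure from the paper's route.
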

\begin{proof}
Similar to the proof of Lemma \ref{lemma: V_mu times V_mu}, we can show that $\ch(V(\mu)\otimes M(\lambda))$ has squares in the denominator of its reduced rational form.
\end{proof}

\begin{rem}
In general the product of two reduced rational forms needs not to be a reduced rational form since $\Chi$ is not a UFD. For example for any $\alpha \in \trueroots^+$ let 
$$
a=\frac{1+e^{-\alpha/2}}{1-e^{-\alpha}}, ~ b=\frac{1-e^{-\alpha/2}}{1-e^{-\alpha}}.
$$
It is clear that both $a$ and $b$ are reduced rational forms but
$$
ab=\frac{1-e^{-\alpha}}{(1-e^{-\alpha})^2}
$$
is not reduced. The author does not know if we restrict to formal characters of modules in $\O$, whether or not the product of reduced rational forms must be a reduced rational form. Nevertheless we do not need this result in this paper.
\end{rem}

\begin{proof}[Proof of Theorem \ref{thm: tensor-closed is finite dimensional}]
Let $M\in \O$ be infinite dimensional and we want to show that $M$ is not tensor-closed. Actually by Proposition \ref{prop: Jordan-Holder} there exists an infinite dimensional $V(\mu)$ in the Jordan-H\"{o}lder series of $M$. By Lemma \ref{lemma: V_mu times V_mu}, $V(\mu)\otimes V(\mu)\notin \O$. Since $\O$ is closed under subquotients, $M\otimes V(\mu)\notin \O$ too. So $M$ is not tensor-closed.
\end{proof}

\begin{rem}\label{rmk: counter example}
There exist two infinite dimensional  modules with tensor product still in $\O$.  Victor Ostrik gave the following example:
Let $\mathfrak{g}=sl(2)\oplus sl(2)$. Let $V$  be a Verma module for $U_q(sl(2))$ (with arbitrary highest weight). Using two projections $\mathfrak{g}\to sl(2)$ we can consider $V$ as a $U_q(\mathfrak{g})$-module in two different ways. Let us call the resulting $U_q(\mathfrak{g})$-modules $V_1$ and $V_2$. Then both $V_1$ and $V_2$ are in the category $\O$ for  $U_q(\mathfrak{g})$, and $V_1\otimes V_2$ is a Verma module of $U_q(\mathfrak{g})$.

It is an interesting question if $\mathfrak{g}$ is simple and $M$, $N\in \O$ are both infinite dimensional, is it always true that $M\otimes N\notin \O$. See \cite{wei2019tensor2} for a discussion in the cases that $\mathfrak{g}$ is simple of type ADE. 
\end{rem}

 \begin{rem}
All arguments and proofs in this paper work for the unquantized case as well. In particular we can proof Theorem \ref{thm: tensor-closed is finite dimensional} for the unquantized case using the method in this paper.
\end{rem}

\bibliography{tensorclosed}{}

\begin{thebibliography}{1}

\bibitem{andersen2011category}
Henning~Haahr Andersen and Volodymyr Mazorchuk.
\newblock Category {$\mathscr{O}$} for quantum groups.
\newblock {\em J. Eur. Math. Soc. (JEMS)}, 17(2):405--431, 2015.

\bibitem{james1978introduction}
James~E. Humphreys.
\newblock {\em Introduction to {L}ie algebras and representation theory},
  volume~9 of {\em Graduate Texts in Mathematics}.
\newblock Springer-Verlag, New York-Berlin, 1978.
\newblock Second printing, revised.

\bibitem{humphreys2008representations}
James~E. Humphreys.
\newblock {\em Representations of semisimple {L}ie algebras in the {BGG}
  category {$\mathscr{O}$}}, volume~94 of {\em Graduate Studies in
  Mathematics}.
\newblock American Mathematical Society, Providence, RI, 2008.

\bibitem{humphreys2015tensorclosed}
James~E. Humphreys.
\newblock Tensor-closed objects of the {BGG} category {$\mathscr{O}$}.
\newblock Available on the author's website,
  \url{http://people.math.umass.edu/~jeh/pub/tensor.pdf}, 2015.

\bibitem{joseph1995quantum}
Anthony Joseph.
\newblock {\em Quantum groups and their primitive ideals}, volume~29 of {\em
  Ergebnisse der Mathematik und ihrer Grenzgebiete (3) [Results in Mathematics
  and Related Areas (3)]}.
\newblock Springer-Verlag, Berlin, 1995.

\bibitem{lunts1999localization}
V.~A. Lunts and A.~L. Rosenberg.
\newblock Localization for quantum groups.
\newblock {\em Selecta Math. (N.S.)}, 5(1):123--159, 1999.

\bibitem{voigt2017complex}
Christian Voigt and Robert Yuncken.
\newblock Complex semisimple quantum groups and representation theory.
\newblock {\em arXiv preprint arXiv:1705.05661}, 2017.

\bibitem{wei2019tensor2}
Zhaoting Wei.
\newblock Tensor products of infinite dimensional modules in the {BGG} category
  of a quantized simple {L}ie algebra of type {ADE}.
\newblock Preprint available on the author's
  \href{https://drive.google.com/file/d/1ZogGH4Rfenq7AXTvxOdsoqGbSHCbfrjr/view}{website},
  2019.

\end{thebibliography}
\bibliographystyle{plain}
\end{document}